\theoremstyle{definition}
\newtheorem{defi}{Definition}[section] 
\newtheorem{thm}[defi]{Theorem} 
\newtheorem{lem}[defi]{Lemma}
\newtheorem{prop}[defi]{Proposition}
\newcommand{\N}{\mathbb{N}}
\newcommand{\Z}{\mathbb{Z}}
\newcommand{\Q}{\mathbb{Q}}
\newcommand{\C}{\mathbb{C}}
\newcommand{\R}{\mathbb{R}}
\newcommand{\p}{\varphi}
\newcommand{\BO}{\mathcal{O}}
\newcommand{\sz}[1]{\left| \vec{#1} \right|}
\title{Rational approximation of Euler's constant using multiple orthogonal polynomials}
\author{Walter Van Assche\footnote{Department of Mathematics, KU Leuven, Celestijnenlaan 200B, Leuven, Belgium. \\
E-mail adresses: \texttt{$\{$thomas.wolfs,walter.vanassche$\}$[at]kuleuven.be}}  \and Thomas Wolfs\footnotemark[1]}
\date{}
\begin{document}
\maketitle

\begin{abstract}
We construct new rational approximants of Euler's constant that improve those of Aptekarev et al. (2007) and Rivoal (2009). The approximants are given in terms of certain (mixed type) multiple orthogonal polynomials associated with the exponential integral. The dual family of multiple orthogonal polynomials leads to new rational approximants of the Gompertz constant that improve those of Aptekarev et al. (2007). Our approach is motivated by the fact that we can reformulate Rivoal's construction in terms of type I multiple Laguerre polynomials of the first kind by making use of the underlying Riemann-Hilbert problem. As a consequence, we can drastically simplify Rivoal's approach, which allows us to study the Diophantine and asymptotic properties of the approximants more easily.
\medbreak

\textbf{Keywords:} Euler's constant, Gompertz constant, rational approximation, multiple orthogonal polynomials, Riemann-Hilbert problems
\end{abstract}

\section{Introduction and overview of the results}

Over the years, many have tried (and failed) to prove irrationality of Euler's constant 
$$\gamma = - \int_0^\infty e^{-x} \ln x \, dx. $$
Also for the related Gompertz constant 
$$\delta = \int_0^\infty \frac{e^{-x}}{1+x} dx,$$
a proof for its potential irrationality has not been found yet. However, as discussed in \cite{Apt,PilehroodLag,RivE}, it follows from Siegel’s method for $E$-functions (see, e.g., \cite{ShidlovskiiBook}) that at least one of them is irrational, and even transcendental. A common strategy to (try to) prove irrationality of a given constant is by means of rational approximation: if one can construct good enough rational approximants to the constant, its irrationality follows, see, e.g., the elementary criterion below.

\begin{lem} \label{OQ}
    Let $x\in\R$ and suppose that there exists a sequence $(p_n,q_n)_{n\in\N}$ in $\Z\times\N $ such that $r(n) = - \ln \left| x - p_n/q_n \right| / \ln q_n \in\R$ for all $n$. If $\liminf_{n\to\infty} r(n) > 1 $, then $x$ is irrational.
\end{lem}

The overall quality of the approximants is therefore captured by the exponents $r(n)$ in    
$$ \left| x - \frac{p_n}{q_n} \right| = \frac{1}{q_n^{r(n)}}. $$
In practice, it is typically hard to find rational approximants that prove irrationality of a given constant. It requires a delicate balance of having denominators $q_n$ that don't grow too fast (Diophantine quality) compared to the decay of the absolute error $\left| x - p_n/q_n \right|$ (approximation quality).
\medbreak

One approximation method that has proven to be successful in the past, is known as Hermite-Padé approximation and uses so-called multiple orthogonal polynomials as the denominators in the approximation (see, e.g., \cite{VA}). For example, Apery's celebrated proof of irrationality of $\zeta(3)$ in \cite{Apery} can be interpreted in this framework as explained by Beukers in \cite{Beukers}. Hermite-Padé approximation can also be used to show irrationality of a large class of ${}_pF_q$ hypergeometric series with $p<q$, see \cite{Wolfs}.

Multiple orthogonal polynomials satisfy orthogonality conditions with respect to several weights $(w_1,\dots,w_r)$ and depend on a multi-index $\vec{n}\in\Z_{\geq 0}^r$, of size $\sz{n}=n_1+\dots+n_r$, that determines the way in which these orthogonality conditions are distributed (see \cite[Chapter 23]{Ismail} for an introduction). Suppose that the weights are supported on one interval $\Lambda\subset\R$ and that their moments are all finite. The type I multiple orthogonal polynomials are given by vectors of polynomials $(A_{\vec{n},1},\dots,A_{\vec{n},r})$, with $\deg A_{\vec{n},j}\leq n_j-1$, for which the type I function $F_{\vec{n}} = \sum_{j=1}^r A_{\vec{n},j} w_j $ satisfies the orthogonality conditions
    $$\int_{\Lambda} F_{\vec{n}}(x) x^k dx = 0,\quad k=0,\dots,\sz{n}-2.$$
The type II multiple orthogonal polynomials are polynomials $P_{\vec{n}}$, with $\deg P_{\vec{n}}\leq \sz{n}$, that satisfy the orthogonality conditions
    $$\int_{\Lambda} P_{\vec{n}}(x) x^k w_j(x) dx = 0,\quad k=0,\dots,n_j-1,\quad j=1,\dots,r.$$
The existence of non-zero type I functions and type II polynomials is always guaranteed. One can also consider so-called mixed type multiple orthogonal polynomials, which satisfy a combination of type I and type II orthogonality conditions, see \cite{Sor_mixed} and \cite{DaemsKuijl}. The type~I and type~II settings are connected through the biorthogonality conditions
    $$ \int_{\Lambda} P_{\vec{n}}(x) F_{\vec{m}}(x) dx = \begin{cases}
        1,\quad \sz{n}=\sz{m}-1, \\
        0,\quad \sz{n}\leq \sz{m}-2 \text{ or } \vec{m}\leq\vec{n},
    \end{cases}  $$
and the underlying Riemann-Hilbert problems, see \cite{G-K-VA}. In this sense, both types can be seen as each other's dual.
\medbreak

In the literature, there are two main constructions for Euler's constant that are based on multiple orthogonal polynomials. The first one is due to Aptekarev et al., see \cite{Apt} for a summary of the results. They used type II multiple Jacobi-Laguerre polynomials to construct linear forms $Q_n^{\text{Apt}} \gamma - P_n^{\text{Apt}} \in\N\gamma + \Z$ such that
$$ \left| \gamma - \frac{P_n^{\text{Apt}}}{Q_n^{\text{Apt}}} \right| \asymp \exp(-2\sqrt{2}n^{1/2}),\quad Q_n^{\text{Apt}} \asymp (2n)! n^{-1/4} \exp(\sqrt{2}n^{1/2}),$$
as $n\to\infty$. The notation $x(n)\asymp y(n)$ as $n\to\infty$ means that $\lim_{n\to\infty }x(n)/y(n) \in\R\backslash\{0\}$. In the sense of Lemma \ref{OQ}, the overall quality is determined by
    $$ r_{\text{Apt}}(n) = \frac{\sqrt{2}}{n^{1/2}\ln n} (1+o(1)),\quad n\to\infty. $$
This is rather poor: since $ \lim_{n\to\infty} r_{\text{Apt}}(n) = 0$, the approximants are far from proving irrationality of $\gamma$. The second construction is due to Rivoal, see \cite{Riv}. He uses certain determinants, in which type II multiple Laguerre polynomials of the first kind appear, to produce linear forms $Q_n^{\text{Riv}} \gamma - P_n^{\text{Riv}} \in\N\gamma + \Z$ for which
    $$ \left| \gamma - \frac{P_n^{\text{Riv}}}{Q_n^{\text{Riv}}} \right| = \BO(\exp(-\frac{9}{2}n^{2/3} + \frac{3}{2}n^{1/3})), \quad Q_n^{\text{Riv}} \asymp n!^2 \exp(3n^{2/3} - n^{1/3}), $$
as $n\to\infty$. In the sense of Lemma \ref{OQ}, the overall quality is then determined by
    $$ r_{\text{Riv}}(n) = \frac{9}{2n^{1/3}\ln n} (1+o(1)),\quad n\to\infty, $$
which is a slight improvement over the first construction as the convergence rate to $0$ is slower. In fact, Rivoal's construction is more general as he produces linear forms in $\gamma+\ln x$ for any given $x>0$. It was shown in \cite{Pilehrood} that Rivoal's construction actually leads to an approximation with smaller denominators
    $$ Q_n^{\text{Riv},\ast} \asymp n!\text{lcm}(1,\dots,n) \exp(3n^{2/3} - n^{1/3}), $$
where $\text{lcm}(1,\dots,n)$ denotes the least common multiple of $1,2,\dots,n$. This was also (partly) conjectured by Rivoal, but a proof was only given later in \cite{Pilehrood} by careful analysis of the determinants after exploiting the underlying type II orthogonality conditions. In Section \ref{L_I}, we will explain how Rivoal's construction can be formulated in terms of type~I multiple Laguerre polynomials. Doing so leads to a more straightforward proof of the above results without having to consider any determinants. This interpretation also opens up the road to further improve Rivoal's result. Indeed, it will allow us to construct the approximants described in the theorem below.
\medbreak

\begin{thm} \label{Res_EC}
    Let $x>0$. There exists $(F_{n;1}^{(I)}(x),F_{n;2}^{(I)}(x))_{n\in\N} \subset \Q[x]\times\Q_{>0}[x]$ such that
        $$ \left| (\gamma + \ln x) - \frac{F_{n;1}^{(I)}(x)}{F_{n;2}^{(I)}(x)}\right| = \BO (\exp(-4x^{1/4}n^{3/4} + x^{1/2} n^{1/2} + \frac{3}{8} x^{3/4} n^{1/4})),\quad n\to\infty, $$
    and 
        $$F_{n;2}^{(I)}(x) \asymp n^{-9/8} \exp(4x^{1/4}n^{3/4} - \frac{1}{2} x^{1/2}n^{1/2} - \frac{3}{8} x^{3/4} n^{1/4}),\quad n\to\infty.$$
    Moreover, we have $n! F_{n;2}^{(I)}(x), n!\text{lcm}(1,\dots,n) F_{n;1}^{(I)}(x)\in\Z[x] $.
\end{thm}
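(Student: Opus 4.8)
The plan is to build $(F_{n;1}^{(I)},F_{n;2}^{(I)})$ from the \emph{type~I} multiple Laguerre polynomials of the first kind introduced in Section~\ref{L_I}, but --- and this is the new idea --- allowing the multi-index (equivalently, a Laguerre parameter, or the number of weights) to depend on both $n$ and $x$, whereas Rivoal's construction corresponds to a fixed multi-index shape. Concretely, fix a multi-index $\vec{n}=\vec{n}(n)$ of size $\sz{n}\sim n$ together with an auxiliary parameter $m=m(n,x)$ to be optimized; the analysis will force the choice $m\asymp\sqrt{xn}$. For this system one writes down the closed form (Rodrigues or contour--integral expression) of the type~I vector $(A_{\vec{n},1},A_{\vec{n},2})$ and of its type~I function $F_{\vec{n}}$. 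As explained in Section~\ref{L_I}, the constant $\gamma+\ln x$ is produced by a parameter--derivative (confluence) of the Laguerre weight, using $\partial_a\int_0^\infty t^{a}e^{-xt}\,dt\big|_{a=0}=-x^{-1}(\gamma+\ln x)$; one then \emph{defines} $F_{n;2}^{(I)}(x),\,F_{n;1}^{(I)}(x)\in\Q(x)$ to be, respectively, the coefficient of $\gamma+\ln x$ and minus the rational part of the resulting linear form, and checks a posteriori that they are polynomials in $x$ (with $F_{n;2}^{(I)}$ having positive coefficients, which follows from a manifestly positive integral representation, or from the sign of the associated normalization constant, which is pinned down by the zero/interlacing structure of the multiple Laguerre system).

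The identity that makes everything work is
$$ F_{n;2}^{(I)}(x)\,(\gamma+\ln x)-F_{n;1}^{(I)}(x)=\int_\Lambda \widetilde F_{\vec{n}}(t)\,\mu(t;x)\,dt, $$
where $\widetilde F_{\vec{n}}$ is the type~I function of the confluent system and $\mu(\cdot\,;x)$ an explicit kernel: the type~I orthogonality annihilates all but the first nontrivial moment, which is a single copy of $\gamma+\ln x$ plus a rational number. For the integrality statement, a direct inspection of the closed form --- exactly along the lines of \cite{Pilehrood} and of the reformulation in Section~\ref{L_I} --- shows that the Pochhammer/factorial normalizations contribute a denominator dividing $n!$, so that $n!\,F_{n;2}^{(I)}(x)\in\Z[x]$, while the rational part $F_{n;1}^{(I)}$ additionally acquires harmonic numbers $H_k=\sum_{j=1}^k 1/j$ with $k\le n$, which enter through the logarithmic moments $\propto (H_k-\gamma-\ln x)$ produced by the parameter--derivative of the weight; since the denominator of $H_k$ divides $\text{lcm}(1,\dots,n)$, one gets $n!\,\text{lcm}(1,\dots,n)\,F_{n;1}^{(I)}(x)\in\Z[x]$.

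The asymptotics are the heart of the matter. For the linear form one applies the saddle point method to the integral (or contour integral) above: after the rescaling $t=nu$ the integrand becomes $e^{\,n\Phi(u;\,x,\,m/n)}\times(\text{algebraic factor})$ with $m/n\asymp\sqrt{x/n}\to 0$; the dominant saddle $u_\star=u_\star(x,m/n)$ contributes the leading term $-4x^{1/4}n^{3/4}$ in $n\Phi(u_\star)$, and expanding $\Phi$ and the Gaussian curvature factor two further orders in $\sqrt{x/n}$ yields the corrections $+x^{1/2}n^{1/2}$, $+\tfrac38 x^{3/4}n^{1/4}$ and the power of $n$. For $F_{n;2}^{(I)}(x)$ one either runs a parallel Laplace analysis on its explicit coefficient sum, or invokes duality: up to explicit factors $F_{n;2}^{(I)}(x)$ is a leading (resp. evaluated) coefficient of the dual type~II multiple Laguerre polynomial, whose strong asymptotics follow from the Riemann--Hilbert steepest--descent analysis set up in Section~\ref{L_I}, giving $F_{n;2}^{(I)}(x)\asymp n^{-9/8}\exp(4x^{1/4}n^{3/4}-\tfrac12 x^{1/2}n^{1/2}-\tfrac38 x^{3/4}n^{1/4})$. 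Dividing the linear form by $F_{n;2}^{(I)}(x)$ then gives the claimed $\BO$--bound for $|(\gamma+\ln x)-F_{n;1}^{(I)}(x)/F_{n;2}^{(I)}(x)|$.

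The main obstacle is this several--term, \emph{uniform} saddle point expansion in a double scaling regime ($n\to\infty$ with $m/n\sim\sqrt{x/n}\to 0$): obtaining not merely the leading exponential but the exact $x^{1/2}n^{1/2}$, $x^{3/4}n^{1/4}$ and $n^{-9/8}$ factors requires locating the saddle to three orders, retaining the sub-sub-leading curvature, controlling that the remainders are negligible uniformly on compact $x$--sets, and tracking the sign of the linear form (so as to get $\BO$ and not merely $\asymp$). Interwoven with this is the optimization over $m$: the proportionality constant in $m\asymp\sqrt{xn}$ must maximize the decay rate of the linear form subject to the constraints of the Laplace analysis, and it is precisely this optimization that produces the constants $4$, $3/8$ and $9/8$. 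One concludes by carrying out that optimization and assembling the three ingredients.
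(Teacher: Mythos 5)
Your proposal diverges from what the theorem can support at two crucial points. First, the construction: you introduce an auxiliary parameter $m=m(n,x)\asymp\sqrt{xn}$ and propose to optimize over it, but the theorem requires a single sequence of polynomials $F_{n;1}^{(I)},F_{n;2}^{(I)}\in\Q[x]$ valid for all $x>0$; a construction whose discrete parameter varies with $x$ cannot yield polynomials in $x$ (the coefficients would jump as $x$ crosses the thresholds of an integer-valued $m(n,x)$), and no such optimization is what produces the constants $4$, $\tfrac12$, $\tfrac38$, $\tfrac98$. In the paper the approximants are a \emph{fixed} modification of the type I multiple Laguerre function, $F_n^{(I)}(x)=\frac{1}{n!}\frac{d^n}{dx^n}\left[x^n L_n^{(I)}(x)\right]$, i.e.\ the mixed type function for the weight systems $(1,\ln x)$ and $(e^{-x},E_1(x))$ with multi-indices $(n+1,n+1)$ and $(n+1,n)$; the $x$-dependence of the exponents comes from the saddle points $\omega$ with $\omega^4=x$ of this fixed object (after the rescaling $t\mapsto n^{3/4}s$ in its contour integral), equivalently from Birkhoff--Trjitzinsky analysis of a Zeilberger recurrence. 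Moreover, without an explicit closed form (the paper's $F_{n;2}^{(I)}(x)=\sum_{k=0}^n\binom{n}{k}^2\binom{n+k}{k}\frac{x^k}{k!}$ and its companion with harmonic numbers) your integrality and positivity claims remain assertions, although the mechanism you name (factorials contributing $n!$, harmonic numbers contributing $\text{lcm}(1,\dots,n)$) is indeed the right one.

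Second, the asymptotic bookkeeping does not close. You claim the linear form $F_{n;2}^{(I)}(x)(\gamma+\ln x)-F_{n;1}^{(I)}(x)$ itself decays like $\exp(-4x^{1/4}n^{3/4}+x^{1/2}n^{1/2}+\tfrac38x^{3/4}n^{1/4})$ and then divide by $F_{n;2}^{(I)}(x)\asymp n^{-9/8}\exp(4x^{1/4}n^{3/4}-\tfrac12x^{1/2}n^{1/2}-\tfrac38x^{3/4}n^{1/4})$; that would give an error of order $\exp(-8x^{1/4}n^{3/4}+\cdots)$, which is not the theorem's bound (and is false). In reality the linear form does not decay at all: the contour cannot be deformed through the saddle on the positive real axis because the rescaled integrand has its poles on $[0,n^{1/4}]$ inside the contour, so only the saddles $\omega=\pm i x^{1/4}$ and $\omega=-x^{1/4}$ contribute, giving $F_n^{(I)}(x)=\BO\bigl(n^{-9/8}\exp(\tfrac12 x^{1/2}n^{1/2})\bigr)$; the decay of $|(\gamma+\ln x)-F_{n;1}^{(I)}/F_{n;2}^{(I)}|$ comes entirely from dividing this mildly growing quantity by the large denominator, which is carried by the excluded saddle $\omega=x^{1/4}$ (the paper pins this down by bounding $F_{n;2}^{(I)}(x)$ below by the Laguerre polynomial $L_n(-x)$ and invoking Perron's formula). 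This separation of admissible saddles between the linear form and the denominator is the crux of the proof and is absent from your argument; relatedly, the Riemann--Hilbert problem is used in the paper only algebraically (to relate the construction to Rivoal's determinants), not as a steepest-descent source of strong asymptotics, so that step of your plan is unsupported as stated.
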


We can therefore obtain approximants of $\gamma$ for which the absolute error has a faster sub-exponential decay than in Rivoal's construction. The overall quality is also better, but still far away from proving irrationality of $\gamma$. Indeed, define $P_n^{(I)}=n!\text{lcm}(1,\dots,n) F_{n;1}^{(I)}(1)$ and $Q_n^{(I)} =n!\text{lcm}(1,\dots,n) F_{n;2}^{(I)}(1) $, then, in the sense of Lemma \ref{OQ},
    $$ r^{(I)}(n) = \frac{4}{n^{1/4}\ln n} (1+o(1)),\quad n\to\infty. $$

We refer to Section \ref{L_I} for the precise construction of the approximants. It will be based on the family of mixed type multiple orthogonal polynomials studied in \cite[Section 4.2]{VAWolfs}. The proof of Theorem \ref{Res_EC} will be given in Section \ref{EC_proof}.
\medbreak

In \cite[Section 4.1]{VAWolfs}, one also studies the dual family of mixed type polynomials. We will explain in Section \ref{GC_mot} how this family can be used to construct approximants of 
$$e^{x}E_1(x) = \int_0^\infty \frac{e^{-t}}{x+t} dt,$$
for any given $x>0$. Here $E_{\nu}(x) = \int_1^\infty t^{-\nu} e^{-xt} dt $ denotes the (generalized) exponential integral. In Section \ref{GC_proof}, we will show that these approximants have the following properties.

\begin{thm} \label{Res_GC}
    Let $x>0$. There exists $(F_{n;1}^{(II)}(x),F_{n;2}^{(II)}(x))_{n\in\N} \subset \Q[x]\times\Q_{>0}[x]$ such that
        $$ \left| e^{x}E_1(x) - \frac{F_{n;1}^{(II)}(x)}{F_{n;2}^{(II)}(x)}\right| = \BO(\exp(-4x^{1/4}n^{3/4} - x^{1/2}n^{1/2} + \frac{3}{8} x^{3/4} n^{1/4})),\quad n\to\infty,$$
    and
        $$F_{n;2}^{(II)}(x) \asymp n^{-9/8} \exp(4x^{1/4}n^{3/4} + \frac{1}{2} x^{1/2}n^{1/2} - \frac{3}{8} x^{3/4} n^{1/4}),\quad n\to\infty.$$
    Moreover, we have $n! F_{n;1}^{(I)}(x), n!F_{n;2}^{(I)}(x)\in\Z[x] $.
\end{thm}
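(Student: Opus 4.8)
The plan is to build and analyse the approximants of Theorem~\ref{Res_GC} as the exact dual of those behind Theorem~\ref{Res_EC}: the type~I side together with $\gamma+\ln x$ is replaced by the type~II side together with the Cauchy transform $e^{x}E_1(x)=\int_0^\infty e^{-t}/(x+t)\,dt$, which is the Stieltjes transform of $e^{-t}\,dt$ on $[0,\infty)$ evaluated (up to sign) at $-x$. Concretely, I would start from the mixed type multiple orthogonal polynomials of \cite[Section 4.1]{VAWolfs}, the dual of the family used for Theorem~\ref{Res_EC}, associated with the weights built from the exponential integrals $E_\nu$ on $[0,\infty)$. From these one reads off, for each $n$, the pair $(F_{n;1}^{(II)}(x),F_{n;2}^{(II)}(x))$ together with the remainder
$$R_n(x):=F_{n;2}^{(II)}(x)\,e^{x}E_1(x)-F_{n;1}^{(II)}(x).$$
Dividing the relevant numerator polynomial by $t+x$ and invoking the (mixed type) orthogonality conditions should rewrite $R_n(x)$ as a single integral over $[0,\infty)$ of a polynomial against a positive density divided by $t+x$; in particular $R_n$ has constant sign for $x>0$ and its absolute value is controlled by that one integral. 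This is the analogue of the integral representation of the linear forms in $\gamma+\ln x$ from Section~\ref{L_I}, with the logarithmic mechanism replaced by the Cauchy-transform one.

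Next comes the arithmetic. From explicit closed forms for the coefficients of $F_{n;1}^{(II)}$ and $F_{n;2}^{(II)}$---finite sums of products of binomial coefficients and Pochhammer symbols, extracted from the Rodrigues-type formulas of the family---one checks that a single factor $n!$ clears all denominators, so that $n!F_{n;1}^{(II)}(x),\,n!F_{n;2}^{(II)}(x)\in\Z[x]$. In contrast to Theorem~\ref{Res_EC}, no factor $\text{lcm}(1,\dots,n)$ is needed: the harmonic-type denominators that show up in the $\gamma+\ln x$ construction originate from differentiating a power weight in its parameter, which has no counterpart on the Cauchy-transform side. Positivity of the coefficients of $F_{n;2}^{(II)}$, so that $F_{n;2}^{(II)}\in\Q_{>0}[x]$, follows from its explicit expression (equivalently from positivity of the weight in the Rodrigues formula).

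For the asymptotics I would represent $F_{n;2}^{(II)}(x)$ and $R_n(x)$ as contour integrals of the form $\frac{1}{2\pi i}\oint e^{\,n\,g_{\pm}(z;x)}h_{\pm}(z;x)\,dz$ coming from those same Rodrigues formulas, and apply the saddle-point method. The saddle equation is the one already analysed for Theorem~\ref{Res_EC}: its dominant solution yields the common leading term $4x^{1/4}n^{3/4}$, the curvature at the saddle produces the algebraic factor $n^{-9/8}$, and the slowly varying factor $h_{\pm}$ together with the saddle location produces the subexponential corrections $\pm x^{1/2}n^{1/2}$ and $\frac{3}{8} x^{3/4}n^{1/4}$. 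The only structural change relative to Theorem~\ref{Res_EC} is a shift in the multi-index defining the dual family (equivalently an interchange of the two weights), which flips the sign of the $x^{1/2}n^{1/2}$ term in both $F_{n;2}^{(II)}$ and $R_n$ while leaving the $n^{3/4}$ and $n^{1/4}$ terms, and the exponent $-9/8$, untouched. Dividing $R_n$ by $F_{n;2}^{(II)}$ then gives the claimed bound on $\left| e^{x}E_1(x)-F_{n;1}^{(II)}(x)/F_{n;2}^{(II)}(x)\right|$, and the stated asymptotics of $F_{n;2}^{(II)}(x)$ are read off from the saddle-point estimate.

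The hard part will be this last step. Since $g_{\pm}(\cdot\,;x)$ depends on $x$ and $n$ only through fractional powers, there may be several competing saddle points, and one has to follow the dominant one with enough precision---or, alternatively, push the Riemann-Hilbert analysis of \cite{VAWolfs} far enough to obtain strong (rather than $n$-th root) asymptotics---to pin down the exact constants in the $n^{3/4}$, $n^{1/2}$ and $n^{1/4}$ terms, as well as the power $n^{-9/8}$ of the algebraic prefactor. A secondary technical point is to make the integral representation of $R_n$ explicit enough that its sign and magnitude are controlled uniformly for $x>0$, which is what lets one pass from the estimate on the linear form to the estimate on the absolute error.
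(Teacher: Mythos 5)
Your overall frame (use the dual mixed type family of \cite[Section 4.1]{VAWolfs} and saddle-point asymptotics) is the right one, but the central mechanism you propose for the error does not match the structure of this construction and, as stated, would fail. The linear form here is not a classical Pad\'e-type remainder for the Markov function $\int_0^\infty e^{-t}/(x+t)\,dt$: the quantity $F_{n;1}^{(II)}(x)+F_{n;2}^{(II)}(x)e^xE_1(x)$ is (up to normalization) the mixed type function $e^xF_{(n,n+1),(n,n)}^{(II\mid 0,0,0)}(x)/n!$ itself, evaluated at a point $x>0$ lying \emph{on} the support of the orthogonality measures. Your ``divide the numerator by $t+x$ and invoke orthogonality'' step would require the reflected denominator $F_{n;2}^{(II)}(-t)$ to carry orthogonality with respect to $e^{-t}dt$, which it does not; the available orthogonality conditions involve the full combination $F_{n;1}^{(II)}(t)e^{-t}+F_{n;2}^{(II)}(t)E_1(t)$ against $t^k$ and $t^k\ln t$, and they do not produce a single positive-density Cauchy integral for $R_n(x)$. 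In particular your ``constant sign'' claim is not to be expected: the analysis in the paper (Propositions \ref{II_int} and \ref{II_AQ_BT} and the final proposition of Section 3) shows that the form lies in $\mathrm{span}\{\mathrm{Re}\,S_n(i\sqrt[4]{x}),\mathrm{Im}\,S_n(i\sqrt[4]{x}),S_n(-\sqrt[4]{x})\}$, i.e.\ it decays only like $n^{-9/8}\exp(-\tfrac12\sqrt{xn})$ and oscillates; the strong bound $\BO(\exp(-4x^{1/4}n^{3/4}-\cdots))$ for the \emph{ratio} comes from dividing this modest, oscillatory quantity by the huge denominator. The paper obtains all of this not from positivity but from a modified saddle-point analysis of the Mellin--Barnes representation of Proposition \ref{II_int} (the contour cannot be deformed through the saddle on the negative real axis, which excludes the dominant behavior $S_n(\sqrt[4]{x})$), cross-checked against the Birkhoff--Trjitzinsky basis of a five-term recurrence generated by Zeilberger's algorithm, with the denominator's behavior pinned down by the elementary lower bound $F_{n;2}^{(II)}(x)\geq L_n(-x)$ and Perron's formula (as in Proposition \ref{I_AQ_denom}).

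A second genuine gap is the arithmetic statement $n!F_{n;1}^{(II)}(x)\in\Z[x]$. You propose to read this off from explicit closed forms for the numerator coefficients; the paper explicitly notes that this route leads to a sub-optimal result, and instead proves integrality indirectly: comparing the Euclidean division of $(1-s)_n^2/(s)_{n+1}$ in $\Z[s]$ with the Mellin-transform identity behind Proposition \ref{II_form}, using that $n!F_{n;2}^{(II)}[k]\in\Z$ and that monomials expand in the Pochhammer basis $(s)_k$ with integer coefficients. Your heuristic that no $\mathrm{lcm}(1,\dots,n)$ is needed because no parameter differentiation occurs is a reasonable guess but not a proof, and without an argument of the above kind you are likely to end up with weaker denominators than claimed. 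The remaining parts of your plan (positivity of $F_{n;2}^{(II)}$, duality with Theorem \ref{Res_EC}, sign flip of the $x^{1/2}n^{1/2}$ term) are consistent with the paper, but they describe the target rather than derive it; the two items above are where the actual proof content lies.
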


The overall quality of the approximants of $\delta=eE_1(1)$ is of the same order as with $\gamma$. Indeed, define $P_n^{(II)}=n!F_{n;1}^{(II)}(1)$ and $Q_n^{(II)} =n! F_{n;2}^{(II)}(1) $, then, in the sense of Lemma \ref{OQ},
    $$ r^{(II)}(n) = \frac{4}{n^{1/4}\ln n} (1+o(1)),\quad n\to\infty. $$
This an improvement compared to the overall quality of the approximants of $\delta$ considered by Aptekarev et al. in \cite[Section 1]{Apt}. They used Laguerre polynomials to construct linear forms $Q_n^{\text{Apt},\ast} \delta - P_n^{\text{Apt},\ast} \in\N\delta + \Z$ such that
    $$ \left| \delta - \frac{P_n^{\text{Apt},\ast}}{Q_n^{\text{Apt},\ast}} \right| \asymp \exp(-4n^{1/2}),\quad Q_n^{\text{Apt},\ast} \asymp n! n^{-1/4} \exp(2n^{1/2}), $$
as $n\to\infty$, and thus, in the sense of Lemma \ref{OQ},
    $$ r_{\text{Apt},\ast}(n) = \frac{4}{n^{1/2}\ln n} (1+o(1)),\quad n\to\infty . $$
These approximants were also studied in \cite[Theorem 2]{PilehroodLag} through a different framework.
\medbreak

It is remarkable that in both of our constructions the approximants arise directly from the mixed type functions themselves. Typically one uses the associated error functions in the Hermite-Padé approximation problem (i.e. their Stieltjes transforms with respect to each type II orthogonality weight), see, e.g., Apéry's proof \cite{Apery}. 
\medbreak

In the last section, we will discuss some ideas on how the approximants might be improved even further.

\section{Approximants for Euler's constant}

In what follows, we will prove Theorem \ref{Res_EC} and provide some motivation for the precise construction of the approximants.

\subsection{Motivation} \label{L_I}

We will first consider a construction based on the type I multiple Laguerre polynomials of the first
kind. Afterwards, we will show that it is essentially equivalent to Rivoal's construction which makes use of the corresponding type II polynomials. Finally, we will propose a modification that leads to the improved approximants in Theorem \ref{Res_EC}.
\medbreak

\textbf{Multiple Laguerre polynomials of the first kind.} The relevant system of weights for the multiple Laguerre polynomials of the first kind is $(w_1(x),w_2(x))=(x^\alpha e^{-x},x^\beta e^{-x})$ on $(0,\infty)$ with $\alpha,\beta>-1$. In what follows, we will denote the type~I functions by $L_{\vec{n}}^{(I\mid \alpha,\beta)}(x) = L_{\vec{n};1}^{(I\mid \alpha,\beta)}(x) x^\alpha + L_{\vec{n};2}^{(I\mid \alpha,\beta)}(x) x^\beta$ and the type II polynomials by $L_{\vec{n}}^{(II\mid \alpha,\beta)}(x)$. We will use the type I functions directly to obtain approximants of $\gamma + \ln x$. This is motivated by the fact that, if $0<\alpha-\beta<1$, the underlying system of weights $(w_1,w_2)$ is a Nikishin system (see \cite{NikiSor}, where this is called an MT-system) on $(0,\infty)$ with
    $$ \frac{w_2(x)}{w_1(x)} = \frac{1}{\Gamma(\beta-\alpha+1)\Gamma(\alpha-\beta)} \int_{-\infty}^0 \frac{(-t)^{\beta-\alpha}}{x-t} dt,$$
see \cite[Eq. 5.12.3]{DLMF}. Indeed, in that case, it can be expected that $L_{\vec{n};1}^{(I\mid \alpha,\beta)}/L_{\vec{n};2}^{(I\mid \alpha,\beta)} \to -w_2/w_1$ as $\sz{n}\to\infty$, e.g., along the diagonal, see \cite[Theorem 1.4]{Lop-Per}.
\medbreak

Explicit expressions for the (normalized) type I polynomials were determined in \cite[Proposition 4.6]{VAWolfs}. It is then straightforward to obtain the integral representation below. Note that here $(s)_k:=\prod_{j=0}^{k-1} (s+j)$ denotes the Pochhammer symbol. 

\begin{prop} \label{L_I_int}
Suppose that $\alpha-\beta\not\in\Z$. Then,
   $$ L_{\vec{n}}^{(I\mid \alpha,\beta)}(x) = (-1)^{\sz{n}+1} \int_\Sigma \frac{1}{(\alpha-t)_{n_1}(\beta-t)_{n_2}} \frac{x^t}{\Gamma(t+1)} \frac{dt}{2\pi i}, $$
   in terms of a counterclockwise contour $\Sigma$ in $\{t\in\C\mid -1< \text{Re}(t) < \min\{\alpha,\beta\}\}$ that encloses $(0,\infty)$ exactly once.
\end{prop}
\begin{proof}
It was proven in \cite[Section 4.2.1]{VAWolfs} that
$$\begin{aligned}
       L_{\vec{n};1}^{(I\mid \alpha,\beta)}(x) &= \frac{(-1)^{\sz{n}+1}}{\Gamma(\alpha+1) (\beta-\alpha)_{n_2} (n_1-1)!} {}_2F_2 \left( \begin{array}{c} -n_1+1,-n_2+\alpha-\beta+1 \\ \alpha+1,\alpha-\beta+1 \end{array}; x \right), \\
    L_{\vec{n};2}^{(I\mid \alpha,\beta)}(x) &= \frac{(-1)^{\sz{n}+1}}{\Gamma(\beta+1) (\alpha-\beta)_{n_1} (n_2-1)!} {}_2F_2 \left( \begin{array}{c} -n_2+1,-n_1-\alpha+\beta+1 \\ \beta+1,-\alpha+\beta+1 \end{array}; x \right).
\end{aligned}$$
The desired representation then follows after an application of the residue theorem.
\end{proof}

We are interested in the limiting case $\beta\to\alpha$. In that case, the second weight becomes  $\lim_{\beta\to\alpha} (x^\beta-x^\alpha)/(\beta-\alpha) = x^\alpha \ln x $
and the type I function is of the form
$$L_{\vec{n}}^{(I\mid \alpha,\alpha)}(x) = (L_{\vec{n};1}^{(I\mid \alpha,\alpha)}(x) + L_{\vec{n};2}^{(I\mid \alpha,\alpha)}(x) \ln x) x^\alpha. $$
The underlying type I polynomials are then determined by the relations
$$L_{\vec{n};1}^{(I\mid \alpha,\alpha)}(x) = \lim_{\beta\to\alpha} [L_{\vec{n};1}^{(I\mid \alpha,\beta)}(x)+L_{\vec{n};2}^{(I\mid \alpha,\beta)}(x)],
\quad  L_{\vec{n};2}^{(I\mid \alpha,\alpha)}(x) = \lim_{\beta\to\alpha} [(\beta-\alpha)L_{\vec{n};2}^{(I\mid \alpha,\beta)}(x)].$$
This procedure is also encoded by the integral representation in Proposition \ref{L_I_int}. In order to obtain less convoluted formulas, we will restrict to the case $\alpha=\beta=0$ and will only consider multi-indices on the diagonal $\vec{n}=(n+1,n+1)$. It is then natural to multiply with the normalization constant $n!^2$.

\begin{prop} \label{L_I_form}
    Denote $L_{n}^{(I)} = n!^2 L_{(n+1,n+1)}^{(I\mid 0,0)}$. Then,
    $$L_{n}^{(I)}(x) = L_{n;1}^{(I)}(x) - L_{n;2}^{(I)}(x) (\gamma + \ln x),$$
    where
    $$L_{n;1}^{(I)}(x) = \sum_{k=0}^{n} \binom{n}{k}^2 \left(  3 H_k - 2 H_{n-k} \right) \frac{x^k}{k!}, \quad    L_{n;2}^{(I)}(x) = \sum_{k=0}^{n} \binom{n}{k}^2 \frac{x^k}{k!},$$
    and $H_l = \sum_{j=1}^l 1/j$ denotes the $l$-th harmonic number.
\end{prop}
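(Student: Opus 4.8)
The plan is to derive both formulas from the integral representation in Proposition~\ref{L_I_int} by a residue computation in which one lets $\beta\to0$. Fix $\alpha=0$ and $0<\beta<1$, and take the diagonal multi-index $\vec{n}=(n+1,n+1)$; since $\sz{n}=2n+2$ is even we have $(-1)^{\sz{n}+1}=-1$, so Proposition~\ref{L_I_int} gives
$$ L_{(n+1,n+1)}^{(I\mid 0,\beta)}(x) \;=\; \frac{1}{2\pi i}\int_\Sigma \frac{1}{\Gamma(t+1)}\,\frac{-x^t}{(-t)_{n+1}\,(\beta-t)_{n+1}}\,dt . $$
The integrand decays rapidly as $t\to+\infty$ thanks to $1/\Gamma(t+1)$ and has only simple poles inside $\Sigma$: at $t=0,1,\dots,n$ (the zeros of $(-t)_{n+1}=\prod_{j=0}^n(j-t)$) and at $t=\beta,\beta+1,\dots,\beta+n$ (the zeros of $(\beta-t)_{n+1}=\prod_{j=0}^n(\beta+j-t)$). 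By the residue theorem (as in the proof of Proposition~\ref{L_I_int}) this writes $L_{(n+1,n+1)}^{(I\mid 0,\beta)}(x)$ as the finite sum of the residues at these $2n+2$ points.

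Now I would let $\beta\to0$. For each $k\in\{0,\dots,n\}$ the poles at $t=k$ and $t=\beta+k$ coalesce into a double pole at $t=k$; writing the integrand near those points as $\Phi_{k,\beta}(t)\big/\bigl((t-k)(t-\beta-k)\bigr)$ with
$$ \Phi_{k,\beta}(t) \;=\; \frac{1}{\Gamma(t+1)}\,\frac{-x^t}{\prod_{j\neq k}(j-t)\,\prod_{j\neq k}(\beta+j-t)} $$
analytic and non-vanishing near $t=k$ for small $\beta$, the sum of the two residues equals $\bigl(\Phi_{k,\beta}(\beta+k)-\Phi_{k,\beta}(k)\bigr)/\beta$, which tends to $\Phi'_{k,0}(k)$ (with $'=d/dt$) as $\beta\to0$. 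Summing over $k$,
$$ L_{(n+1,n+1)}^{(I\mid 0,0)}(x) \;=\; \sum_{k=0}^n \Phi'_{k,0}(k),\qquad \Phi_{k,0}(t)=\frac{1}{\Gamma(t+1)}\,\frac{-x^t}{\prod_{j\neq k}(j-t)^2}, $$
which is consistent with the limit relations recorded just before Proposition~\ref{L_I_form} that encode the extraction of the coefficient of $\ln x$.

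It remains to evaluate $\Phi'_{k,0}(k)$ by logarithmic differentiation: $x^t$ contributes $\ln x$, the factor $1/\Gamma(t+1)$ contributes $-\psi(k+1)=\gamma-H_k$, and $\prod_{j\neq k}(j-t)^{-2}$ contributes $2\sum_{j\neq k}\tfrac{1}{j-k}=-2(H_k-H_{n-k})$, so $\Phi'_{k,0}(k)=\Phi_{k,0}(k)\,(\gamma+\ln x-3H_k+2H_{n-k})$ with $\Phi_{k,0}(k)=\dfrac{-x^k}{k!\,\prod_{j\neq k}(k-j)^2}=\dfrac{-x^k}{(k!)^3((n-k)!)^2}$, using $\prod_{j\neq k}(k-j)=(-1)^{n-k}k!\,(n-k)!$. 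Multiplying by $-n!^2$ and simplifying $\dfrac{n!^2}{(k!)^3((n-k)!)^2}=\dbinom{n}{k}^2\dfrac1{k!}$ gives
$$ L_n^{(I)}(x) \;=\; \sum_{k=0}^n \binom{n}{k}^2\frac{x^k}{k!}\,\bigl(\gamma+\ln x-3H_k+2H_{n-k}\bigr), $$
and separating the terms carrying the factor $\gamma+\ln x$ from those that do not yields the claimed expressions for $L_{n;2}^{(I)}$ and $L_{n;1}^{(I)}$. The only delicate point is the limit $\beta\to0$ — one must check that no pole enters or leaves $\Sigma$ in the limit and that the described pairing of residues is legitimate, after which the statement reduces, term by term, to the elementary fact that the sum of the residues at two coalescing simple poles converges to the residue at the resulting double pole; the evaluation of $\psi$ at a positive integer and of the two finite harmonic sums is routine.
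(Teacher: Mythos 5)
Your proof is correct and follows essentially the same route as the paper: both evaluate the contour integral of Proposition \ref{L_I_int} by residues at $t=0,\dots,n$, with the derivative at the double poles producing $\ln x$, $-\psi(k+1)=\gamma-H_k$ and the harmonic sums $-2H_k+2H_{n-k}$, and the same simplification $n!^2/(k!^3((n-k)!)^2)=\binom{n}{k}^2/k!$. The only difference is cosmetic: you reach the double-pole residues by coalescing the simple poles at $t=k$ and $t=\beta+k$ as $\beta\to0$, whereas the paper works directly with the $\beta=0$ integrand and its double poles.
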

\begin{proof}
    The integrand in the contour integral
    $$ L_{n}^{(I)}(x) = - \frac{1}{2\pi i} \int_\Sigma \frac{1}{\Gamma(t+1)} \frac{n!^2}{(-t)_{n+1}^2} x^t dt $$
    has double pole at $t=0,\dots,n$ and thus
    $$L_{n}^{(I)}(x) = - \sum_{k=0}^n \frac{d}{dt}\left[\frac{1}{\Gamma(t+1)} \frac{n!^2}{(-t)_{n+1;k}^2} x^t\right]_{t=k}. $$
   Here we used the notation $(x)_{n;k} = \prod_{j=0,j\neq k}^{n-1} (x+j)$. We therefore have
        $$L_{n}^{(I)}(x) = -  \sum_{k=0}^n \frac{d}{dt}\left[\frac{1}{\Gamma(t+1)} \frac{n!^2}{(-t)_{n+1;k}^2}\right]_{t=k} x^k - \sum_{k=0}^n \frac{1}{\Gamma(k+1)} \frac{n!^2}{(-k)_{n+1;k}^2} x^k \ln x, $$
    and this immediately gives the desired formula for $L_{n;2}^{(I)}(x)$. For $L_{n;1}^{(I)}(x)$, we compute the derivative and obtain
        $$L_{n;1}^{(I)}(x) = \sum_{k=0}^n \frac{\psi(k+1)}{\Gamma(k+1)} \frac{n!^2}{(-k)_{n+1;k}^2} x^k - \sum_{k=0}^n \frac{1}{\Gamma(k+1)} \frac{n!^2}{(-k)_{n+1;k}^2} \sum_{j=0,j\neq k}^n \frac{2}{-k+j} x^k$$
    in terms of the digamma function $\psi(k+1) = \Gamma'(k+1)/\Gamma(k+1) = - \gamma + H_k$. This leads to the stated formula for $L_{n;1}^{(I)}(x)$.
\end{proof}

For $x=1$, the above approximants were already investigated in \cite[Theorem 1.1 ($a=3$)]{PilehroodEC}. Observe that the denominators $L_{n;2}^{(I)}(1) = \sum_{k=0}^{n} \binom{n}{k}^2 \frac{1}{k!}$ are similar to the denominators $ \sum_{k=0}^{n} \binom{n}{k}^2 \binom{n+k}{k}$ that appear in Apéry's proof \cite{Apery} of the irrationality of $\zeta(2)$, but that a factor $(n+k)!/n!$ is missing in the $k$-th term.
\medbreak

The following Diophantine properties are immediate from Proposition \ref{L_I_form}.

\begin{prop}
    $ n! \, \text{lcm}(1,\dots,n) L_{n;1}^{(I)}(x), n! L_{n;2}^{(I)}(x) \in\Z[x]. $
\end{prop}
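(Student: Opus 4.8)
The plan is to work directly from the explicit formulas in Proposition \ref{L_I_form}, namely
$$L_{n;1}^{(I)}(x) = \sum_{k=0}^{n} \binom{n}{k}^2 \bigl( -3 H_k + 2 H_{n-k} \bigr) \frac{x^k}{k!}, \qquad L_{n;2}^{(I)}(x) = \sum_{k=0}^{n} \binom{n}{k}^2 \frac{x^k}{k!},$$
and to show that each coefficient, after multiplication by the stated factor, is an integer. For $L_{n;2}^{(I)}(x)$ the coefficient of $x^k$ is $\binom{n}{k}^2/k!$, so I must check that $n!\binom{n}{k}^2/k! \in \Z$. First I would rewrite $\binom{n}{k}^2/k! = \frac{1}{k!}\binom{n}{k}\binom{n}{k}$ and absorb one factor: $\frac{n!}{k!}\binom{n}{k} = \frac{n!}{k!}\cdot\frac{n!}{k!(n-k)!}$. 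The cleanest route is to observe $n!\binom{n}{k}^2/k! = \binom{n}{k}\cdot\frac{n!}{k!(n-k)!}\cdot\frac{n!}{k!}\cdot\frac{(n-k)!}{n!}\cdot\ldots$; more simply, $\frac{n!}{k!}\binom{n}{k} = \binom{n}{k}\,(k+1)(k+2)\cdots n$, which is manifestly an integer since it is $\binom{n}{k}$ times a product of integers. Hence $n! L_{n;2}^{(I)}(x)\in\Z[x]$.

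For $L_{n;1}^{(I)}(x)$ the coefficient of $x^k$ is $\binom{n}{k}^2(-3H_k + 2H_{n-k})/k!$. By the same argument as above, $n!\binom{n}{k}^2/k! \in \Z$, so it remains to handle the harmonic numbers. The key point is that $\operatorname{lcm}(1,\dots,n) H_k = \operatorname{lcm}(1,\dots,n)\sum_{j=1}^k 1/j \in \Z$ for every $k \le n$, since each $j\in\{1,\dots,k\}$ divides $\operatorname{lcm}(1,\dots,n)$; likewise $\operatorname{lcm}(1,\dots,n) H_{n-k}\in\Z$. Therefore $\operatorname{lcm}(1,\dots,n)(-3H_k + 2H_{n-k}) \in \Z$, and multiplying the full coefficient by $n!\operatorname{lcm}(1,\dots,n)$ yields $\bigl(n!\binom{n}{k}^2/k!\bigr)\cdot\bigl(\operatorname{lcm}(1,\dots,n)(-3H_k+2H_{n-k})\bigr) \in \Z$. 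Summing over $k$ gives $n!\operatorname{lcm}(1,\dots,n)L_{n;1}^{(I)}(x)\in\Z[x]$.

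There is no real obstacle here: the statement is an elementary integrality check, and the only thing to be slightly careful about is the bookkeeping that separates the two sources of denominators — the factorials $k!$ in the denominator of the binomial-squared terms, cleared by the single factor $n!$, and the harmonic numbers, cleared by the single factor $\operatorname{lcm}(1,\dots,n)$. I would state these as two short lemmas (or inline observations): (i) $\frac{n!}{k!}\binom{n}{k}\in\Z$ for $0\le k\le n$, proved by writing it as $\binom{n}{k}\prod_{j=k+1}^{n} j$; and (ii) $\operatorname{lcm}(1,\dots,n)H_m\in\Z$ for $0\le m\le n$. Combining them with the explicit formulas from Proposition \ref{L_I_form} gives the claim immediately, with the mild sharpening that $n! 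L_{n;2}^{(I)}(x)\in\Z[x]$ in fact needs no $\operatorname{lcm}$ factor at all, consistent with the statement as written.
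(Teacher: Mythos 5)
Your proof is correct and follows essentially the same route as the paper, which simply declares the integrality properties immediate from the explicit formulas in Proposition \ref{L_I_form}; you have merely spelled out the two elementary observations (that $\frac{n!}{k!}\binom{n}{k}\in\Z$ and that $\operatorname{lcm}(1,\dots,n)H_m\in\Z$ for $m\le n$) that the paper leaves implicit. No gap.
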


Using the strategy described in Section \ref{EC_proof}, we can study the asymptotics of the denominators $L_{n;2}^{(I)}(x)$ and errors $L_{n}^{(I)}(x)$. Doing so essentially leads to the improved version of Rivoal's result from \cite{Pilehrood}, but in a more straightforward way without having to consider determinants. We will explain this phenomenon in what follows.
\medbreak

\textbf{Connection to Rivoal's construction.} In \cite{Riv}, Rivoal considers approximants produced by determinants 
$$\begin{vmatrix} L_{n}^{(II)}(z) & E_{n}^{(II)}(z) \\ L_{n+1}^{(II)}(z) & E_{n+1}^{(II)}(z) \end{vmatrix} = Q_n(z) (\gamma + \ln z) - P_n(z) $$
in which he uses type II multiple Laguerre polynomials of the first kind
    $$ L_{n}^{(II)}(x) = L_{(n,n)}^{(II)}(x) / n!^2 , \quad L_{\vec{n}}^{(II)}(x) = (-1)^{n_1+n_2} e^{-x} \frac{d^{n_1}}{dx^{n_1}}[x^{n_1} \frac{d^{n_2}}{dx^{n_2}}[x^{n_2} e^{-x} ]]$$
and a particular combination $E_{n}^{(II)}(z) = E_{n;2}^{(II)}(z) - E_{n;1}^{(II)}(z) \ln z$ of the two errors in the associated Hermite-Padé approximation problem
    $$E_{n;1}^{(II)}(z) = \int_0^\infty \frac{L_n^{(II)}(x)}{z-x} e^{-x} dx,\quad E_{n;2}^{(II)}(z) = \int_0^\infty \frac{L_n^{(II)}(x)}{z-x} e^{-x} \ln x \, dx .$$
Such determinants of objects in the type II setting are connected to the type I functions through Mahler's relation, see \cite[Theorem 23.8.3]{Ismail} or \cite[Section 4]{G-K-VA}, which can be proven via the Riemann-Hilbert problem for the underlying multiple orthogonal polynomials. It was shown in \cite[Theorem 4.1]{G-K-VA} that the Riemann-Hilbert problem yields the following relation

   $$ Y_{\vec{n}}^{(I)} = \begin{pmatrix}
        0 & -I_2 \\
        1 & 0
    \end{pmatrix}
    (Y_{\vec{n}}^{(II)})^{-T}
    \begin{pmatrix}
        0 & 1 \\
        -I_2 & 0
    \end{pmatrix}, $$
where, up to a particular scaling of each row,
    $$ Y_{\vec{n}}^{(I)} = \begin{pmatrix} 
    L_{\vec{n}+\vec{e}_1;1}^{(I)} & L_{\vec{n}+\vec{e}_1;2}^{(I)} & E_{\vec{n}+\vec{e}_1}^{(I)} \\ 
    L_{\vec{n}+\vec{e}_2;1}^{(I)} & L_{\vec{n}+\vec{e}_2;2}^{(I)} & E_{\vec{n}+\vec{e}_2}^{(I)} \\ 
    L_{\vec{n};1}^{(I)} & L_{\vec{n};2}^{(I)} & E_{\vec{n}}^{(I)}
    \end{pmatrix},\quad 
    Y_{\vec{n}}^{(II)} = \begin{pmatrix} L_{\vec{n}}^{(II)} & E_{\vec{n};1}^{(II)} & E_{\vec{n};2}^{(II)} \\
    L_{\vec{n}-\vec{e}_1}^{(II)} & E_{\vec{n}-\vec{e}_1;1}^{(II)} & E_{\vec{n}-\vec{e}_1;2}^{(II)} \\ 
    L_{\vec{n}-\vec{e}_2}^{(II)} & E_{\vec{n}-\vec{e}_2;1}^{(II)} & E_{\vec{n}-\vec{e}_2;2}^{(II)} 
    \end{pmatrix}. $$
Rows associated with multi-indices $\vec{n}+\vec{e}_k$ are scaled such that the entry $L_{\vec{n}+\vec{e}_k;k}^{(I)}(x)$ is monic. Rows associated with multi-indices $\vec{n}-\vec{e}_k$ are scaled such that $L_{\vec{n}-\vec{e}_k}^{(II)}(x)$ satisfies the type I normalization condition with respect to the $k$-th orthogonality weight, i.e.
    $$ \int_0^\infty L_{\vec{n}-\vec{e}_k}^{(II)}(x) x^{n_k-1} w_k(x) dx = 1. $$
This particular scaling ensures that $\det(Y_{\vec{n}}^{(II)})=1$. We may compute the transpose of the inverse of $Y_{\vec{n}}^{(II)}$ via the adjugate formula; since $\det(Y_{\vec{n}}^{(II)})=1$, $(Y_{\vec{n}}^{(II)})^{-T} $ is exactly the cofactor matrix $CF(Y_{\vec{n}}^{(II)})$ of $Y_{\vec{n}}^{(II)}$. We therefore have, up to an appropriate scalar,
\begin{equation} \label{L_I_cof}
    CF(Y_{\vec{n}}^{(II)}) = \begin{vmatrix} 
    E_{\vec{n}}^{(I)} & -L_{\vec{n};1}^{(I)} & -L_{\vec{n};2}^{(I)} \\ 
    -E_{\vec{n}+\vec{e}_1}^{(I)} & L_{\vec{n}+\vec{e}_1;1}^{(I)} & L_{\vec{n}+\vec{e}_1;2}^{(I)} \\ 
    -E_{\vec{n}+\vec{e}_2}^{(I)} & L_{\vec{n}+\vec{e}_2;1}^{(I)} & L_{\vec{n}+\vec{e}_2;2}^{(I)}
    \end{vmatrix},
\end{equation}
and thus,
\begin{equation} \label{L_I_det}
    L_{\vec{n}}^{(I)} = \begin{vmatrix} L_{\vec{n}-\vec{e}_1}^{(II)} & E_{\vec{n}-\vec{e}_1}^{(II)} \\ L_{\vec{n}-\vec{e}_2}^{(II)} & E_{\vec{n}-\vec{e}_2}^{(II)} \end{vmatrix}.
\end{equation}

Observe the similarities with Rivoal's determinant when $\vec{n}=(n,n)$. In practice, it can therefore already be expected that the type I functions will generate approximants that are equivalent to Rivoal's. In order to actually prove this, we require an additional step. It is known that there is a four-term recurrence relation for $L_{\vec{n}}^{(II)}(z)$ using multi-indices on the step-line, see \cite{CoussVA}. We have
    $$z L_{n+1,n}^{(II)}(z) = L_{n+1,n+1}^{(II)}(z) + b_n L_{n+1,n}^{(II)}(z) + c_n L_{n,n}^{(II)}(z) + d_n L_{n,n-1}^{(II)}(z),$$
where
$$b_n=3n+2,\quad c_n=3n^2+3n+1,\quad d_n=n^3,$$
and thus
$$\begin{vmatrix} L_{n,n}^{(II)}(z) & E_{n,n}^{(II)}(z) \\ L_{n+1,n+1}^{(II)}(z) & E_{n+1,n+1}^{(II)}(z) \end{vmatrix} = (z-b_n) \begin{vmatrix} L_{n,n}^{(II)}(z) & E_{n,n}^{(II)}(z) \\ L_{n+1,n}^{(II)}(z) & E_{n+1,n}^{(II)}(z) \end{vmatrix} - d_n \begin{vmatrix} L_{n,n}^{(II)}(z) & E_{n,n}^{(II)}(z) \\ L_{n,n-1}^{(II)}(z) & E_{n,n-1}^{(II)}(z) \end{vmatrix}. $$
If we then use \eqref{L_I_cof} to express $L_{\vec{n}+\vec{e}_2}^{(I)}$ with $\vec{n}=(n+1,n)$ and $L_{\vec{n}+\vec{e}_1}^{(I)}$ with $\vec{n}=(n,n)$ as determinants, similarly as in \eqref{L_I_det}, we get
$$\begin{vmatrix} L_{n,n}^{(II)}(z) & E_{n,n}^{(II)}(z) \\ L_{n+1,n+1}^{(II)}(z) & E_{n+1,n+1}^{(II)}(z) \end{vmatrix} = (b_n-z)  \frac{L_{n+1,n+1}^{(I)}(z)}{\mathcal{L}_{n+1,n;2}} + d_n \frac{L_{n+1,n}^{(I)}(z)}{\mathcal{L}_{n,n;1}}. $$
The appropriate scalars are
$$\mathcal{L}_{n+1,n;2} = \frac{\text{LC}(L_{n+1,n+1;2}^{(I)})}{\int_0^\infty L_{n,n}^{(II)}(x) x^{n} e^{-x} dx},\quad \mathcal{L}_{n,n;1} = \frac{\text{LC}(L_{n+1,n;1}^{(I)})}{\int_0^\infty  L_{n,n-1}^{(II)}(x) x^{n-1} e^{-x} \ln x\, dx}, $$
where $\text{LC}(P)$ denotes the leading coefficient of the input polynomial $P$. It can be shown that $\mathcal{L}_{n+1,n;2} = 1/n!^6$ and $\mathcal{L}_{n,n;1} = n^3/n!^6$ by making use of the contour integral for the type I functions in Proposition \ref{L_I_int} and the Mellin transform of the type II polynomials in \cite[Lemma 4.2]{VAWolfs}. Hence, we find
    $$ (n+1)^2 \begin{vmatrix} L_{n}^{(II)}(z) & E_{n}^{(II)}(z) \\ L_{n+1}^{(II)}(z) & E_{n+1}^{(II)}(z) \end{vmatrix} = (b_n-z) n!^2 L_{n+1,n+1}^{(I)}(z) + n!^2 L_{n+1,n}^{(I)}(z), $$
which provides the precise connection to the type I functions and explains the scaling we proposed for the associated approximants in Proposition \ref{L_I_form}.
\bigbreak


\textbf{Modification of the multiple Laguerre polynomials of the first kind.} We consider a mixed type setting with two systems of weights $(x^{\alpha},x^{\beta})$ and $(e^{-x},E_{\nu+1}(x))$, where $\alpha,\beta,\nu>-1$, as in \cite[Section 4.2]{VAWolfs}. The associated mixed type function 
$$F_{\vec{n},\vec{m}}^{(I\mid \alpha,\beta,\nu)}(x) = F_{\vec{n},\vec{m};1}^{(I\mid \alpha,\beta,\nu)}(x) x^\alpha + F_{\vec{n},\vec{m};2}^{(I\mid \alpha,\beta,\nu)}(x) x^\beta,$$
with $\deg F_{\vec{n},\vec{m};j}^{(I\mid \alpha,\beta,\nu)} = n_j-1 $, satisfies the orthogonality conditions
    $$ \int_0^\infty F_{\vec{n},\vec{m}}^{(I\mid \alpha,\beta,\nu)}(x) x^k e^{-x} dx = 0,\quad k=0,\dots,m_1-1. $$
    $$ \int_0^\infty F_{\vec{n},\vec{m}}^{(I\mid \alpha,\beta,\nu)}(x) x^k E_{\nu+1}(x) dx = 0,\quad k=0,\dots,m_2-1. $$

For appropriate multi-indices, explicit expressions for the mixed type functions were obtained in \cite[Section 4.2]{VAWolfs}. This leads to the following integral representation for the type~I functions. 

\begin{prop} \label{I_int}
Suppose that $\sz{n}=\sz{m}+1$ and $m_1+1\geq m_2$. Assume that $\alpha-\beta\not\in\Z$. Then,
   $$ F_{\vec{n},\vec{m}}^{(I\mid \alpha,\beta,\nu)}(x) =  \frac{(-1)^{\sz{n}+1}}{m_2!} \int_\Sigma  \frac{(t+\nu+1)_{m_2} }{(\alpha-t)_{n_1}(\beta-t)_{n_2}} \frac{x^t}{\Gamma(t+1)} \frac{dt}{2\pi i}, $$
   in terms of a counterclockwise contour $\Sigma$ in $\{t\in\C\mid -1< \text{Re}(t) < \min\{\alpha,\beta\}\}$ that encloses $(0,\infty)$ exactly once.
\end{prop}
\begin{proof}
It was shown in \cite[Theorem 4.8]{VAWolfs} that
$$F_{\vec{n},\vec{m}}^{(I\mid \alpha,\beta,\nu)}(x) = \frac{x^{-\nu}}{m_2!} \frac{d^{m_2}}{dx^{m_2}}\left[x^{m_2+\nu} L_{\vec{n}}^{(I\mid \alpha,\beta)}(x) \right].$$
The desired expression then follows from the integral representation of $L_{\vec{n}}^{(I\mid \alpha,\beta)}(x)$ in Proposition \ref{L_I_int}.
\end{proof}

We are again only interested in the limiting case $\beta\to\alpha$. To ease notation, we will restrict to the case $\alpha=\nu=0$ and will only consider multi-indices of the form $\vec{n} = (n+1,n+1)$ and $\vec{m} = (n+1,n)$. The approximants are then given by $F_{n}^{(I)} = n!^2 F_{(n+1,n+1),(n+1,n)}^{(I\mid 0,0,0)} $. They arise as the following modification of the multiple Laguerre polynomials of the second kind
\begin{equation} \label{I_Rodr}
    F_{n}^{(I)}(x) = \frac{1}{n!} \frac{d^n}{dx^n}[x^n L_n^{(I)}(x)].
\end{equation}

\subsection{Quality} \label{EC_proof}

\textbf{Diophantine quality.} We will study the Diophantine properties of the approximants via representation \eqref{I_Rodr}. In principle, we could also use the integral representation in Proposition \ref{I_int}, similarly as in Proposition \ref{L_I_form}, but this would lead to a sub-optimal Diophantine result.

\begin{prop} \label{I_form}
Denote $F_{n}^{(I)} = n!^2 F_{(n+1,n+1),(n+1,n)}^{(I\mid 0,0,0)} $. Then,
    $$F_{n}^{(I)}(x) = F_{n;1}^{(I)}(x) - F_{n;2}^{(I)}(x) (\gamma + \ln x),$$
where
$$\begin{aligned}
    F_{n;1}^{(I)}(x) &= \sum_{k=0}^{n} \binom{n}{k}^2 \binom{n+k}{k} \left( 3H_k - 2 H_{n-k} \right) \frac{x^k}{k!} 
    + \sum_{k=0}^{n} \binom{n}{k}^2 \frac{x^k}{k!} \sum_{l=1}^{n} \binom{n+k}{n-l} \frac{(-1)^l}{l}, \\
    F_{n;2}^{(I)}(x) &= \sum_{k=0}^{n} \binom{n}{k}^2 \binom{n+k}{k} \frac{x^k}{k!}.
\end{aligned}$$
\end{prop}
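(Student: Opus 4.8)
The plan is to prove the formula directly from the Rodrigues-type representation \eqref{I_Rodr}, namely $F_n^{(I)}(x)=\frac{1}{n!}\frac{d^n}{dx^n}\bigl[x^nL_n^{(I)}(x)\bigr]$, by plugging in the closed form of $L_n^{(I)}$ from Proposition~\ref{L_I_form} and differentiating monomial by monomial. Splitting $L_n^{(I)}(x)=L_{n;1}^{(I)}(x)+\gamma L_{n;2}^{(I)}(x)+L_{n;2}^{(I)}(x)\ln x$ and recalling that both $L_{n;1}^{(I)}$ and $L_{n;2}^{(I)}$ are linear combinations of the monomials $x^k/k!$ with $0\le k\le n$, the whole computation reduces to evaluating $\frac{1}{n!}\frac{d^n}{dx^n}x^{n+k}$ and $\frac{1}{n!}\frac{d^n}{dx^n}\bigl[x^{n+k}\ln x\bigr]$.

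The first is immediate, $\frac{1}{n!}\frac{d^n}{dx^n}x^{n+k}=\frac{(n+k)!}{n!\,k!}\,x^k=\binom{n+k}{k}x^k$, and this already accounts for the factor $\binom{n+k}{k}$ that distinguishes $F_{n;j}^{(I)}$ from $L_{n;j}^{(I)}$. For the second I would use the Leibniz rule together with $\frac{d^j}{dx^j}\ln x=(-1)^{j-1}(j-1)!\,x^{-j}$ for $j\ge1$: the $j=0$ term of $\frac{d^n}{dx^n}[x^{n+k}\ln x]$ contributes $\frac{(n+k)!}{k!}x^k\ln x$, while the terms $j=1,\dots,n$ contribute a polynomial in $x$; after dividing by $n!$ and simplifying the coefficient via $\frac{1}{n!}\binom{n}{j}(j-1)!\,\frac{(n+k)!}{(k+j)!}=\frac{1}{j}\,\frac{(n+k)!}{(n-j)!\,(k+j)!}=\frac1j\binom{n+k}{n-j}$, one gets
$$\frac{1}{n!}\frac{d^n}{dx^n}\bigl[x^{n+k}\ln x\bigr]=\binom{n+k}{k}x^k\ln x-\sum_{j=1}^n\binom{n+k}{n-j}\frac{(-1)^j}{j}\,x^k.$$

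Substituting these evaluations back into $\frac{1}{n!}\frac{d^n}{dx^n}[x^nL_n^{(I)}(x)]$ and collecting terms then finishes the proof. The $\gamma$-contribution coming from $\gamma L_{n;2}^{(I)}$ and the $\ln x$-contribution coming from $L_{n;2}^{(I)}\ln x$ combine into $(\gamma+\ln x)\sum_{k}\binom{n}{k}^2\binom{n+k}{k}\frac{x^k}{k!}=(\gamma+\ln x)F_{n;2}^{(I)}(x)$; the purely rational leftovers, namely the image of $x^nL_{n;1}^{(I)}(x)$ under $\frac1{n!}\frac{d^n}{dx^n}$ together with the finite $j$-sum above (weighted by the coefficients $\binom{n}{k}^2/k!$ of $L_{n;2}^{(I)}$), assemble into exactly the displayed formula for $F_{n;1}^{(I)}(x)$. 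I do not expect any genuine obstacle: the only delicate point is the index bookkeeping in the Leibniz expansion and the simplification of the coefficient to $\frac1j\binom{n+k}{n-j}$. One could instead evaluate $\frac1{n!}\frac{d^n}{dx^n}[x^{n+k}\ln x]$ by differentiating $x^{n+k+s}$ in $s$ at $s=0$, which yields $\binom{n+k}{k}x^k\bigl(H_{n+k}-H_k+\ln x\bigr)$ and is marginally shorter, but that route reaches the stated form of $F_{n;1}^{(I)}$ only after invoking the binomial–harmonic identity $\binom{n+k}{k}(H_{n+k}-H_k)=\sum_{j=1}^n\binom{n+k}{n-j}\frac{(-1)^{j-1}}{j}$, so the Leibniz computation is the cleaner one to carry out.
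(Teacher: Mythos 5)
Your proposal is correct and follows essentially the same route as the paper: apply the Rodrigues-type formula \eqref{I_Rodr} to the expression from Proposition \ref{L_I_form} and evaluate $\frac{1}{n!}\frac{d^n}{dx^n}[x^{n+k}(\gamma+\ln x)]$ via the Leibniz rule, with the coefficient simplification $\frac{1}{n!}\binom{n}{j}(j-1)!\frac{(n+k)!}{(k+j)!}=\frac{1}{j}\binom{n+k}{n-j}$ carried out exactly as in the paper. The only cosmetic difference is that you treat the $\gamma$- and $\ln x$-parts separately while the paper keeps $\gamma+\ln x$ together, which changes nothing in the computation.
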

\begin{proof}
Note that
    $$ \frac{1}{n!} \frac{d^n}{dx^n}[x^{n+k} (\gamma+\ln x)] = \sum_{l=0}^n \frac{1}{(n-l)!} \frac{d^{n-l}}{dx^{n-l}}[x^{n+k}] \frac{1}{l!} \frac{d^l}{dx^l}[\gamma+\ln x] $$
and thus 
$$ \frac{1}{n!} \frac{d^n}{dx^n}[x^{n+k} (\gamma+\ln x)] = x^k \binom{n+k}{n} (\gamma+\ln x) + x^k \sum_{l=1}^n \binom{n+k}{n-l} \frac{(-1)^{l-1}}{l}. $$
Now use formula \eqref{I_Rodr} and Proposition \ref{L_I_form} to obtain the stated result.
\end{proof}

Observe that the denominators $F_{n;2}^{(I)}(1) = \sum_{k=0}^{n} \binom{n}{k}^2 \binom{n+k}{k} \frac{1}{k!}$ are related to the denominators $\sum_{k=0}^{n} \binom{n}{k}^2 \binom{n+k}{k}^2$ of the approximants in Apéry's proof \cite{Apery} of the irrationality of $\zeta(3)$, but that a factor $(n+k)!/n!$ is missing in the $k$-th term.
\medbreak

The Diophantine properties below then follow immediately from Proposition \ref{I_form}. 

\begin{prop} \label{I_DQ}
    $ n! \, \text{lcm}(1,\dots,n) F_{n;1}^{(I)}(x), n! F_{n;2}^{(I)}(x) \in\Z[x]. $
\end{prop}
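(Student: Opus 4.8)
The plan is to read off the coefficients of $F_{n;1}^{(I)}(x)$ and $F_{n;2}^{(I)}(x)$ directly from the explicit formulas in Proposition \ref{I_form} and to verify integrality of each monomial coefficient after multiplication by the stated factors. No structural input beyond Proposition \ref{I_form} is needed; the argument is a term-by-term divisibility check, resting on two elementary facts: (i) $n!/k! = (k+1)(k+2)\cdots n \in \Z$ for $0 \le k \le n$, and (ii) $\operatorname{lcm}(1,\dots,n)/l \in \Z$ for $1 \le l \le n$, which in particular gives $\operatorname{lcm}(1,\dots,n) H_m \in \Z$ for every $m \le n$ (applied with $m=k$ and $m=n-k$).

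For $F_{n;2}^{(I)}(x)$, the coefficient of $x^k$ equals $\binom{n}{k}^2 \binom{n+k}{k}/k!$, so multiplying by $n!$ produces $\binom{n}{k}^2 \binom{n+k}{k} \cdot (n!/k!)$, a product of integers by (i) and the integrality of binomial coefficients. Hence $n! F_{n;2}^{(I)}(x) \in \Z[x]$ (indeed with positive coefficients).

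For $F_{n;1}^{(I)}(x)$, the coefficient of $x^k$ is
$$\binom{n}{k}^2 \binom{n+k}{k}\bigl(-3H_k + 2H_{n-k}\bigr)\frac{1}{k!} \;-\; \binom{n}{k}^2 \frac{1}{k!} \sum_{l=1}^{n} \binom{n+k}{n-l}\frac{(-1)^l}{l}.$$
Multiplying through by $n!\,\operatorname{lcm}(1,\dots,n)$ and distributing, the first summand becomes $\binom{n}{k}^2\binom{n+k}{k}\cdot\frac{n!}{k!}\cdot\bigl(\operatorname{lcm}(1,\dots,n)(-3H_k+2H_{n-k})\bigr)$, which is a product of integers by (i) and (ii); the second becomes $\binom{n}{k}^2\cdot\frac{n!}{k!}\cdot\sum_{l=1}^{n}\binom{n+k}{n-l}(-1)^l\,\frac{\operatorname{lcm}(1,\dots,n)}{l}$, again a sum of products of integers by (i), (ii) and integrality of binomial coefficients. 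Therefore $n!\,\operatorname{lcm}(1,\dots,n)\,F_{n;1}^{(I)}(x)\in\Z[x]$.

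There is no genuine obstacle here; the only point deserving a moment's care is the bookkeeping of which denominators each factor clears — the $1/k!$ is absorbed by $n!/k!$, whereas the harmonic numbers and the inner $1/l$ require the extra $\operatorname{lcm}(1,\dots,n)$. It is worth remarking that $n!$ alone does not suffice for $F_{n;1}^{(I)}(x)$, since the harmonic numbers contribute prime-power denominators not dividing $n!$ in general, which explains the asymmetry between the two clearing factors in the statement.
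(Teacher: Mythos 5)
Your proof is correct and takes essentially the same route as the paper, whose entire proof is that the claim "follows immediately from Proposition \ref{I_form}"; you simply spell out the term-by-term divisibility ($n!/k!\in\Z$ and $\text{lcm}(1,\dots,n)H_m\in\Z$, $\text{lcm}(1,\dots,n)/l\in\Z$ for $m,l\le n$) that the paper leaves implicit. Only your closing aside is imprecise: the denominators of $H_k$ for $k\le n$ always divide $n!$, so the asymmetry is not that $n!$ cannot clear harmonic denominators per se, but that $n!$ is already consumed clearing $1/k!$, leaving only $n!/k!$ (plus binomial factors) for the $H_k$, $H_{n-k}$ and $1/l$ terms — this does not affect the validity of your argument for the stated sufficiency.
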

\bigbreak

\textbf{Approximation quality.} In order to obtain the asymptotic behavior of the denominators $F_{n;2}^{(I)}(x)$ and errors $F_{n}^{(I)}(x)$ in the approximation, we will proceed as follows. For the errors, we will apply a variation of the saddle point method on the integral representation described in Proposition \ref{I_int}. For the denominators, we will generate an underlying recurrence relation via the \textit{Zeilberger}-command in Maple (see \cite{A=B}). This can be done because the denominators are hypergeometric in nature, see Proposition \ref{I_form}. The asymptotics of solutions of the generated recurrence relation can be analyzed via the Birkhoff-Trjitzinsky theory, see \cite{BT} or \cite{WimpZeil}. In general, this theory implies that an $(r+1)$-term recurrence relation, with coefficients $C_{n,j}$ that admit a Poincaré type expansion
    $$ C_{n,j} \asymp n^{\kappa_j/\omega} \sum_{k=0}^\infty c_{j,k} n^{-k/\omega},\quad n\to\infty ,$$
($\kappa_j\in\Z$, $\omega\in\Z_{\geq 1}$, $c_{j,k}\in\C$) has $r$ solutions $S_{n,i}$ that form a basis over $\C$ for the space of solutions of the recurrence relation and have an asymptotic expansion of the form
    $$S_{n,i} \asymp n^{\alpha n + \beta} \exp\left(\sum_{j=0}^{\rho-1} \mu_j n^{(\rho-j)/\rho}\right) \sum_{k=0}^K (\ln n)^k n^{r_{K-k}/\rho} \sum_{l=0}^\infty b_{k,l} n^{-l/\rho}, \quad n\to\infty, $$
($\alpha,\beta,\mu_j,b_{k,l}\in\C$, $\rho,\alpha\rho,r_j\in\Z$, $\rho\geq 1$, $r_0=0$) with $\rho\in\omega \Z_{\geq 1}$.
\medbreak

By applying the \textit{Zeilberger}-command in Maple to $F_{n;2}^{(I)}(x)$, we can find a recurrence relation that leads to the following result.

\begin{prop}
    The denominators $F_{n;2}^{(I)}(x)$ arise as solutions of a five-term recurrence relation $\sum_{k=0}^4 c_{n,k}(x) S_{n+k}(x)=0$ in which $c_{n,k}(x)\in\Z[x,n]$.
    For $x=1$, the coefficients are given by
    $$c_{n,0}(1) = -(n + 1)^3(n + 2)(729n^4 + 7866n^3 + 31485n^2 + 55420n + 36204), $$
    $$\begin{aligned}
        c_{n,1}(1) = (n + 2)&(2916n^7 + 56979n^6 + 457068n^5 + 1963246n^4 \\
        &+ 4896596n^3 + 7111851n^2 + 5581516n + 1829348),
    \end{aligned} $$
    $$\begin{aligned}
        c_{n,2}(1) = &-4374n^8 - 41364n^7 - 17307n^6 + 1312308n^5 + 7426532n^4 \\
        &+ 19463372n^3 + 27733489n^2 + 20769508n + 6413772
    \end{aligned}$$
    $$\begin{aligned}
        c_{n,3}(1) = (n + 3)&(2916n^7 + 56250n^6 + 443064n^5 + 1848631n^4 \\
    &+ 4411534n^3 + 6018161n^2 + 4343036n + 1278012),
    \end{aligned}$$
    $$c_{n,4}(1) = -(n + 3)(n + 4)^3(729n^4 + 4950n^3 + 12261n^2 + 13132n + 5132).$$  
\end{prop}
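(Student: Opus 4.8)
The sequence to be analyzed is a single hypergeometric sum in the summation variable. Writing
$$ t_{n,k}(x) = \binom{n}{k}^2 \binom{n+k}{k} \frac{x^k}{k!}, \qquad F_{n;2}^{(I)}(x) = \sum_{k\in\Z} t_{n,k}(x), $$
one checks that $t_{n+1,k}(x)/t_{n,k}(x)$ and $t_{n,k+1}(x)/t_{n,k}(x)$ are rational functions of $n$ and $k$; indeed $t_{n,k+1}(x)/t_{n,k}(x) = (n-k)^2(n+k+1)\,x/(k+1)^4$, so that $F_{n;2}^{(I)}(x) = {}_3F_3(-n,-n,n+1;1,1,1;x)$ and $t_{n,k}(x)$ is a proper hypergeometric term with natural boundaries (it vanishes for $k<0$ and for $k>n$). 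The plan is therefore to apply Zeilberger's creative-telescoping algorithm to $t_{n,k}(x)$.

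Concretely, I would run the \texttt{Zeilberger} command in Maple (see \cite{A=B}) on $t_{n,k}(x)$, searching for a telescoping relation of minimal order. It returns polynomials $c_{n,0}(x),\dots,c_{n,J}(x)$, not all zero, together with a rational certificate $R(n,k,x)$ such that
$$ \sum_{j=0}^{J} c_{n,j}(x)\, t_{n+j,k}(x) = G_{n,k+1}(x) - G_{n,k}(x), \qquad G_{n,k}(x) := R(n,k,x)\, t_{n,k}(x), $$
and one finds $J=4$. Summing this identity over all $k\in\Z$: for fixed $n$ the left-hand side has finite support in $k$, so it equals $\sum_{j} c_{n,j}(x)\,F_{n+j;2}^{(I)}(x)$, while the right-hand side telescopes and, from the explicit form of $R(n,k,x)$, one checks that $G_{n,k}(x)$ vanishes for all sufficiently large and sufficiently negative $k$, so there is no boundary contribution. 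Hence
$$ \sum_{j=0}^{4} c_{n,j}(x)\, F_{n+j;2}^{(I)}(x) = 0. $$
After clearing denominators in $x$ and removing the polynomial content, the coefficients can be taken in $\Z[x,n]$ (as one reads off from the explicit output), and specializing $x=1$, together with fixing the overall sign, reproduces the five polynomials displayed in the statement.

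No conceptual difficulty is hidden in this argument: the content is the mechanical creative-telescoping computation. The single point that genuinely requires care is the boundary analysis in the summation step, namely verifying that $G_{n,k}(x)$ contributes no correction terms at the edges of the support — this is exactly what makes the recurrence homogeneous rather than inhomogeneous — which holds here because $t_{n,k}(x)$ has compact $k$-support and $R(n,k,x)$ has no poles at the relevant integers. To make the verification independent of trusting the package, one substitutes the claimed $c_{n,j}(x)$ and the certificate $R(n,k,x)$ into the displayed telescoping identity and divides through by $t_{n,k}(x)$; since all shift ratios of $t_{n,k}(x)$ are rational, this reduces to a polynomial identity in $n$, $k$, $x$, checkable in finitely many steps. (That no four-term recurrence with polynomial coefficients annihilates $F_{n;2}^{(I)}(x)$ — i.e.\ that ``five-term'' is sharp — is not needed for the statement, but would follow from the failure of Zeilberger's algorithm at order $3$.)
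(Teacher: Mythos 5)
Your proposal is correct and is essentially the paper's own argument: the paper simply applies the \texttt{Zeilberger} command in Maple to the hypergeometric sum $F_{n;2}^{(I)}(x)=\sum_k \binom{n}{k}^2\binom{n+k}{k}x^k/k!$ from Proposition \ref{I_form} and reads off the order-four creative-telescoping recurrence. Your added details (the term ratio, the certificate, the vanishing boundary terms, and the package-independent verification of the telescoping identity) only make explicit what the paper leaves to the computer algebra system.
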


Even though it is not required in what follows, it is worthwhile to note that it can be shown that the three other linear independent solutions of the recurrence relation are $F_{n;1}^{(I)}(x)$ and the Stieltjes transforms of $F_{n}^{(I)}$ with respect to each type II orthogonality measure, i.e.
    $$E_{n;1}^{(I)}(x) = \int_0^\infty \frac{F_{n}^{(I)}(t)}{x-t} e^{-t} dt ,\quad E_{n;2}^{(I)}(x) = \int_0^\infty \frac{F_{n}^{(I)}(t)}{x-t} E_{1}(t) dt.$$
Hence this recurrence relation actually governs the whole underlying Hermite-Padé approximation problem. We may prove this in the following way. We can show that $F_{n}^{(I)}(x)$ is a solution of the recurrence relation via the contour integral representation in Proposition \ref{I_int}, hence the linear combination $F_{n;1}^{(I)}(x)=F_{n}^{(I)}(x)-F_{n;2}^{(I)}(x)(\gamma+\ln x)$ must be as well. By exploiting the orthogonality conditions on $F_{n}^{(I)}(x)$, similarly as in \cite[Proposition 7]{Riv}, we can then prove that $E_{n;1}^{(I)}(x)$ and $E_{n;2}^{(I)}(x)$ are solutions as well. In order to prove linear independence, we have to show that
    $$ \Delta_n(x) = \begin{vmatrix} F_{n;1}^{(I)}(x) & F_{n;2}^{(I)}(x) & E_{n;1}^{(I)}(x) & E_{n;2}^{(I)}(x) \\ 
    F_{n+1;1}^{(I)}(x) & F_{n+1;2}^{(I)}(x) & E_{n+1;1}^{(I)}(x) & E_{n+1;2}^{(I)}(x) \\  
    F_{n+2;1}^{(I)}(x) & F_{n+2;2}^{(I)}(x) & E_{n+2;1}^{(I)}(x) & E_{n+2;2}^{(I)}(x) \\ 
    F_{n+3;1}^{(I)}(x) & F_{n+3;2}^{(I)}(x) & E_{n+3;1}^{(I)}(x) & E_{n+3;2}^{(I)}(x) \\  \end{vmatrix} \neq 0.$$ 
By making use of the recurrence relation $\sum_{k=0}^4 c_{n-1,k}(x)/c_{n-1,4}(x)  S_{n+k-1}(x)=0$, we can reduce the index of the last row to $n-1$, which gives 
    $$\Delta_n(x) = - \frac{c_{n-1,0}(x)}{c_{n-1,4}(x)} \Delta_{n-1}(x).$$
Repeating this procedure $n-1$ more times yields
    $$ \Delta_n(x) = (-1)^n \Delta_{0}(x) \prod_{j=1}^{n} \frac{c_{n-j,0}(x)}{c_{n-j,4}(x)} . $$    
The remaining determinant $\Delta_{0}(x)$ can then be computed explicitly. In order to this, we may replace the columns with Stieltjes transforms $E_{n;1}^{(I)}(x)$ and $E_{n;2}^{(I)}(x)$ by columns with the corresponding numerator polynomials from the associated Hermite-Padé problem.
\medbreak

The limiting characteristic polynomial $-729(\lambda-1)^4$ of the generated recurrence relation in $x=1$ has a single root at $\lambda=1$, hence the $n$-th roots of solutions of the recurrence relation converge to one or become zero for large $n$, see the extension of Poincaré's lemma in \cite{Pituk}. In order to obtain more precise asymptotics, we have to switch to the more powerful Birkhoff-Trjitzinsky theory \cite{BT}.

\begin{prop} \label{I_AQ_BT}
    There are four solutions $\{S_n^{(I)}(\omega) \mid \omega^4=x\}$ that form a basis over $\C$ for the space of solutions of the recurrence relation for $F_{n;2}^{(II)}(x)$ with asymptotic behavior
    $$S_{n}^{(I)}(\omega) = n^{-9/8} \exp\left(4\omega n^{3/4} - \frac{1}{2}\omega^2 n^{1/2} - \frac{3}{8}\omega^3 n^{1/4} \right) \left(1+\BO(n^{-1/4})\right),\quad n\to\infty.$$
\end{prop}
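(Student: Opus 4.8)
The plan is to feed the five-term recurrence $\sum_{k=0}^{4}c_{n,k}(x)S_{n+k}(x)=0$ of the preceding proposition into the Birkhoff--Trjitzinsky scheme. The first step is to normalise, dividing by $c_{n,4}(x)$. Each $c_{n,k}(x)$ is a polynomial in $n$ of degree $8$ whose leading coefficient in $n$ does not depend on $x$ — visible both in the general formulae and in the displayed $x=1$ data, where these leading coefficients are $-729\binom{4}{k}(-1)^{4-k}$ — so the normalised coefficients $\tilde c_{n,k}(x):=c_{n,k}(x)/c_{n,4}(x)$ admit Poincaré-type expansions $\tilde c_{n,k}(x)\asymp\sum_{j\ge0}\gamma_{k,j}(x)\,n^{-j}$ (i.e.\ $\omega=1$ in the bookkeeping recalled above), and the limiting characteristic polynomial is $\sum_{k=0}^{4}\gamma_{k,0}\lambda^{k}=(\lambda-1)^{4}$ for every $x>0$. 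Hence the unperturbed recurrence has the single quadruple root $\lambda=1$; Poincaré's lemma \cite{Pituk} already forces the $n$-th roots of all solutions to tend to $1$, and to resolve the four individual solutions we must pass to the finer Birkhoff--Trjitzinsky expansion.

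Next I would invoke the Birkhoff--Trjitzinsky theorem \cite{BT,WimpZeil}, which provides a basis of four solutions of the general form displayed in the text. The leading data pin the shape: there is no $n^{\alpha n}$-factor (the root $\lambda=1$ does not grow), no logarithmic factors at leading order, and, since $\lambda=1$ has multiplicity four and the recurrence has order four, the whole solution space lies in this one cluster, whose Puiseux exponent is $\rho=4$ (the four branches are indexed by $\omega$ with $\omega^{4}=x$). So one seeks
$$S_{n}(\omega)=n^{\beta}\exp\!\Bigl(\mu_{1}n^{3/4}+\mu_{2}n^{1/2}+\mu_{3}n^{1/4}\Bigr)\bigl(1+\BO(n^{-1/4})\bigr),$$
substitutes into the normalised recurrence, divides by $S_{n}$, expands each ratio $S_{n+k}/S_{n}$ in powers of $n^{-1/4}$, multiplies through by the expansions of $\tilde c_{n,k}(x)$, and collects. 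Because $\sum_{k}\gamma_{k,0}k^{j}=0$ for $j=0,1,2,3$ (fourth differences of polynomials of degree $<4$), the coefficients of $n^{0},n^{-1/4},n^{-1/2},n^{-3/4}$ vanish identically; the first genuine relation appears at order $n^{-1}$, where — the $n^{-1}$-coefficient of $S_{n+k}/S_{n}$ being a polynomial in $k$ of degree $4$ with leading term $\tfrac{27}{2048}\mu_{1}^{4}k^{4}$ — it collapses, again using the vanishing differences, to $\tfrac{27}{2048}\mu_{1}^{4}\sum_{k}\gamma_{k,0}k^{4}+\sum_{k}\gamma_{k,1}(x)=0$, i.e.\ $\mu_{1}^{4}=256x$, so $\mu_{1}=4\omega$ with $\omega^{4}=x$. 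Carrying the order-matching on to $n^{-5/4},n^{-3/2},n^{-7/4}$ then yields successively $\mu_{2}=-\tfrac12\omega^{2}$, $\mu_{3}=-\tfrac38\omega^{3}$ and $\beta=-\tfrac98$, and the subsequent orders recursively fix the coefficients of the $\BO(n^{-1/4})$-amplitude, whose existence is part of the Birkhoff--Trjitzinsky conclusion.

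Finally, the four choices $\omega\in\{x^{1/4},ix^{1/4},-x^{1/4},-ix^{1/4}\}$ give solutions whose $n^{3/4}$-coefficients $4\omega$ are pairwise distinct, hence the solutions have pairwise distinct leading behaviour, are linearly independent over $\C$, and — the recurrence having order four — form a basis of its solution space; this is the assertion. As an independent check on the dominant branch $\omega=x^{1/4}$ (which is also the branch realised by $F_{n;2}^{(I)}(x)$ itself, as needed later), one may analyse $F_{n;2}^{(I)}(x)=\sum_{k}\binom{n}{k}^{2}\binom{n+k}{k}x^{k}/k!$ directly by the Laplace method: the ratio of consecutive summands is $(n-k)^{2}(n+k+1)x/(k+1)^{4}$, the saddle sits at $k_{0}=x^{1/4}n^{3/4}-\tfrac14x^{1/2}n^{1/2}-\tfrac9{32}x^{3/4}n^{1/4}+\cdots$, and expanding $\ln(\text{summand at }k_{0})$ together with the Gaussian width $\sqrt{\pi k_{0}/2}$ reproduces $F_{n;2}^{(I)}(x)\asymp n^{-9/8}\exp\bigl(4x^{1/4}n^{3/4}-\tfrac12x^{1/2}n^{1/2}-\tfrac38x^{3/4}n^{1/4}\bigr)$.

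The main obstacle is twofold. On the one hand there is the bookkeeping: one must extract enough terms of the Poincaré expansions of the degree-$8$ coefficients $c_{n,k}(x)$ (appreciably bulkier for general $x$ than the displayed $x=1$ specialisation) and push the order-matching down to $n^{-7/4}$ — a computation best handled with a computer algebra system, though tempered by the fact that the first three orders are automatically vacuous, so the real algebra only begins at order $n^{-1}$. On the other hand there is the need to make the appeal to Birkhoff--Trjitzinsky rigorous: one should either cite the existence part of the theorem, so that the substitution above serves only to pin down $\mu_{1},\mu_{2},\mu_{3},\beta$ and the amplitude series, or, to stay self-contained, construct a solution with the indicated leading behaviour by a contraction argument on the tail of the recurrence (the Perron--Kreuser approach) and then note that running this for each of the four branches produces the required basis.
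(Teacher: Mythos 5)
Your proposal follows essentially the same route as the paper: after noting via the limiting characteristic polynomial $-729(\lambda-1)^4$ and Poincaré/Pituk that no exponential or $n^{\alpha n}$ factor occurs, you fit the Birkhoff--Trjitzinsky ansatz with $\rho=4$ and no logarithmic terms to the normalized recurrence and determine $\mu_1,\mu_2,\mu_3,\beta$ by order-matching (with the heavy algebra delegated to a computer algebra system), which is exactly the Wimp--Zeilberger strategy the paper uses. Your additional details (vanishing of the first four orders, the order-$n^{-1}$ equation giving $\mu_1^4=256x$, and the Laplace-method cross-check on the explicit sum, which is essentially the paper's Proposition \ref{I_AQ_denom}) are consistent with and refine the paper's argument rather than diverging from it.
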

\begin{proof}
    We will use the strategy proposed in \cite{WimpZeil} (and the accompanying Maple-package \textit{AsyRec} \cite{Zeil}). There one suggests to first try to obtain an asymptotic expansion of the form 
    $$n^{\beta} \exp\left(\sum_{j=1}^{\rho-1} \mu_{j} n^{(\rho-j)/\rho}\right) \left(1+\BO(n^{-1/\rho})\right),\quad n\to\infty $$
    with no logarithmic terms. Note that since we have already established that the $n$-th roots of solutions of the recurrence relation converge to zero, we could assume that $\alpha=\mu_0=0$. We can then try to fit such asymptotics to the recurrence relation. To this end, we consider the normalized relation $1 + \sum_{k=1}^4 c_{n,k}(z)/c_{n,0}(x) S_{n+k}(x)/S_{n}(x) = 0 $, use that 
        $$\frac{S_{n+k}(z)}{S_{n}(x)} = (1+k/n)^{\beta} \exp\left(\sum_{j=1}^{\rho-1} \mu_j n^{(\rho-j)/\rho}((1+k/n)^{(\rho-j)/\rho}-1)\right) \left(1+\BO(n^{-1/\rho})\right) $$
    and expand the powers of $1+k/n$ asymptotically up to an appropriate order. Doing so leads to certain equations for the $\beta$ and $\mu_j$. If we set $\rho=4$, the desired result follows after some computations in Maple.
\end{proof}

The possible exponents for $x=1$ are summarized in the table below.

\begin{center}
\begin{tabular}{c||c|c|c}
    $S_n^{(I)}(\omega)$ & $n^{3/4}$ & $n^{1/2}$ & $n^{1/4}$ \\ \hline
    $1$ & $4$ & $-1/2$ & $-3/8$ \\
    $i$ & $(4i)$ & $1/2$ & $(3/8i)$ \\
    $-i$ & $(-4i)$ & $1/2$ & $(-3/8i)$ \\
    $-1$ & $-4$ & $-1/2$ & $3/8$ \\
\end{tabular}
\end{center}

By providing a suitable estimate for the denominator, we can associate a specific asymptotic behavior to it.

\begin{prop} \label{I_AQ_denom}
Suppose that $x>0$. Then $F_{n,2}^{(I)}(x) \asymp S_{n}^{(I)}(\sqrt[4]{x})$ as $n\to\infty$.
\end{prop}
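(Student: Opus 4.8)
The plan is to pin down which of the four Birkhoff–Trjitzinsky solutions $S_n^{(I)}(\omega)$, $\omega^4=x$, matches the sequence $F_{n;2}^{(I)}(x)$ by producing an independent asymptotic estimate for $F_{n;2}^{(I)}(x)$ and comparing exponential rates. Since $x>0$, the candidate with $\omega=\sqrt[4]{x}>0$ is the only one whose leading exponential $\exp(4\omega n^{3/4})$ is real and grows fastest in modulus among the four (the two complex conjugate solutions $\omega=\pm i\sqrt[4]{x}$ have $|\exp(4\omega n^{3/4})|=1$, and $\omega=-\sqrt[4]{x}$ gives exponential decay). So it suffices to show that $F_{n;2}^{(I)}(x)$ grows like $\exp\!\big(4x^{1/4}n^{3/4}(1+o(1))\big)$ with the claimed power-of-$n$ and the correction terms of orders $n^{1/2}$ and $n^{1/4}$, and then invoke the fact that a solution of the recurrence with this precise exponential rate must be (a nonzero constant multiple of, up to lower-order corrections) $S_n^{(I)}(\sqrt[4]{x})$, because the other three basis solutions have strictly different leading behavior and hence cannot contribute to a sequence with this growth.

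First I would obtain the rate directly from the closed form in Proposition \ref{I_form}: $F_{n;2}^{(I)}(x)=\sum_{k=0}^n \binom{n}{k}^2\binom{n+k}{k}\frac{x^k}{k!}$. This is a positive sum, so its size is governed by the largest term; I would set $k=tn$ and apply Stirling to the summand, writing $\log\big(\binom{n}{k}^2\binom{n+k}{k}x^k/k!\big) = n\,g(t)+o(n)$ for an explicit $g$ depending on $x$, locate the maximizing $t=t^\ast(n)$ (which will tend to $0$ as $n\to\infty$, so one needs the boundary-layer regime $k\ll n$), and carry the expansion to enough orders to recover the $n^{3/4}$, $n^{1/2}$, $n^{1/4}$ terms. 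Concretely, for $k$ of order $n^{3/4}$ one has $\binom{n}{k}^2\sim n^{2k}/k!^2$ and $\binom{n+k}{k}/k!\sim n^k/k!^2$ roughly, so the summand behaves like $(n^3x)^k/k!^4$ up to corrections; the sum of $y^k/k!^4$ over $k$ is a ${}_0F_3$-type series whose asymptotics as $y\to\infty$ give $\exp(4y^{1/4})$ with a power $y^{-3/8}$ prefactor, and substituting $y\sim n^3x$ reproduces $\exp(4x^{1/4}n^{3/4})$ and $n^{-9/8}$. The subleading exponential terms $-\tfrac12 x^{1/2}n^{1/2}$ and $-\tfrac38 x^{3/4}n^{1/4}$ come from the next corrections to $\binom{n}{k}^2\binom{n+k}{k}/k!$ when $k\sim n^{3/4}$ (e.g. the $\exp(-k^2/n)$-type factor from $\binom{n}{k}^2 \approx n^{2k}e^{-k^2/n}/k!^2$), which must be tracked carefully through the Laplace/saddle evaluation of the sum.

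The main obstacle is exactly this multi-scale asymptotic evaluation of the sum: it is not a clean fixed-proportion Laplace method but a boundary-layer expansion where the effective summation variable scales as $n^{3/4}$, and one must keep three orders of the exponent plus the algebraic prefactor consistent. A cleaner route, which I would prefer, is to avoid re-deriving the asymptotics from scratch and instead argue as follows: by Proposition \ref{I_AQ_BT} the four $S_n^{(I)}(\omega)$ form a basis, so $F_{n;2}^{(I)}(x)=\sum_{\omega^4=x} \lambda_\omega(x)\, S_n^{(I)}(\omega)$ for some constants $\lambda_\omega(x)$; since $F_{n;2}^{(I)}(x)>0$ and grows (it clearly tends to $+\infty$, being a sum of positive terms with $k=n$ term already of size $\binom{2n}{n}/n!$), the decaying solution $\omega=-\sqrt[4]{x}$ and the bounded oscillatory solutions $\omega=\pm i\sqrt[4]{x}$ cannot dominate, forcing $\lambda_{\sqrt[4]{x}}(x)\neq 0$; then $F_{n;2}^{(I)}(x)\sim \lambda_{\sqrt[4]{x}}(x)\,S_n^{(I)}(\sqrt[4]{x})$, which is the claim. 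To make ``cannot dominate'' rigorous one still needs a crude lower bound $F_{n;2}^{(I)}(x)\gg \exp(c\,n^{1/2})$ for some $c>0$ — which follows trivially by keeping a single well-chosen term of the sum — to rule out the oscillatory solutions, and the positivity plus growth to rule out $\omega=-\sqrt[4]{x}$; this reduces the problem to a one-line estimate rather than a full saddle-point analysis, and that is the version I would write up.
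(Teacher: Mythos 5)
Your preferred route is the paper's own argument: expand $F_{n;2}^{(I)}(x)$ in the basis $\{S_n^{(I)}(\omega):\omega^4=x\}$ of Proposition \ref{I_AQ_BT}, produce an elementary lower bound on the positive sum, and conclude that the coefficient of $S_n^{(I)}(\sqrt[4]{x})$ cannot vanish. The paper's lower bound is the comparison $F_{n;2}^{(I)}(x)\geq\sum_{k=0}^n\binom{n}{k}\frac{x^k}{k!}=L_n(-x)$ together with Perron's formula, giving growth of order $\exp(2\sqrt{nx})$; your ``keep one well-chosen term'' bound plays exactly the same role.

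There is, however, a genuine flaw in how you justify that the other solutions ``cannot dominate''. You call $S_n^{(I)}(\pm i\sqrt[4]{x})$ \emph{bounded} oscillatory solutions because $|\exp(4\omega n^{3/4})|=1$; but these solutions are not bounded: for $\omega=\pm i x^{1/4}$ one has $\omega^2=-x^{1/2}$, so the $n^{1/2}$ term in the exponent is $+\tfrac12 x^{1/2}n^{1/2}$ (see the table after Proposition \ref{I_AQ_BT}), and $|S_n^{(I)}(\pm i\sqrt[4]{x})|\asymp n^{-9/8}\exp(\tfrac12 x^{1/2}n^{1/2})$. Consequently your stated criterion --- a lower bound $F_{n;2}^{(I)}(x)\gg\exp(c\,n^{1/2})$ ``for some $c>0$'' --- is not sufficient as written: you need $c>\tfrac12 x^{1/2}$ to force the coefficient of $S_n^{(I)}(\sqrt[4]{x})$ to be nonzero. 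The gap is easily repaired, because your single-term bound (take $k\approx\sqrt{nx}$, giving $\exp\bigl((2-o(1))\sqrt{nx}\bigr)$) or the paper's Laguerre bound comfortably exceeds this threshold; but the threshold must be identified and checked, and your write-up does not do so. Two smaller slips: the parenthetical claim that the $k=n$ term $\binom{2n}{n}/n!$ already shows growth is wrong (that term tends to $0$); and the first, saddle-point-style route you sketch for the sum is unnecessary here --- the whole point of the paper's proof is that a crude lower bound plus the Birkhoff--Trjitzinsky basis suffices.
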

\begin{proof}
Since $x>0$, we can use the estimate
    $$F_{n,2}^{(I)}(x) \geq \sum_{k=0}^{n} \binom{n}{k} \frac{x^k}{k!} = L_{n}(-x) $$
in terms of the classical Laguerre orthogonal polynomial 
$$L_n(x) = \frac{1}{n!} \frac{d^n}{dx^n}[x^n e^{-x}],$$
see, e.g., \cite[Chapter 5]{Szego}. According to Perron's formula \cite[Theorem 8.22.3]{Szego}, we have
$$L_{n}(-x) \asymp n^{-1/4} \exp\left(2\sqrt{nx} \right),\quad n\to\infty.$$
Hence the coefficient of $S_{n}^{(I)}(\sqrt[4]{x})$ in the expansion of $F_{n,2}^{(I)}(x)$ with respect to the basis $\{S_n^{(I)}(\omega) \mid \omega^4=x\}$ can not be zero and we must have the above.
\end{proof}

The asymptotics of the error can be obtained by applying a variation of the saddle point method on the integral representation in Proposition \ref{I_int}. As expected, these asymptotics are also determined by the solutions $S_n^{(I)}(\omega)$.

\begin{prop} \label{I_AQ_error}
    Suppose that $x>0$. Then,
    $$F_{n}^{(I)}(x) \in\text{span}\{\text{Re}(S_{n}^{(I)}(i\sqrt[4]{x})),\text{Im}(S_{n}^{(I)}(i\sqrt[4]{x})),S_n^{(I)}(-\sqrt[4]{x})\}.$$     
\end{prop}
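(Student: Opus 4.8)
The plan is to combine two facts: that $F_{n}^{(I)}(x)$ is a solution of the five-term recurrence relation of Proposition~\ref{I_AQ_BT}, and a direct saddle-point estimate of its magnitude. That $F_{n}^{(I)}(x)$ solves the recurrence follows from the contour integral in Proposition~\ref{I_int}, exactly as announced in the discussion preceding this statement. Since $\{S_{n}^{(I)}(\omega)\mid\omega^{4}=x\}$ is a $\C$-basis of the (four-dimensional) solution space for $n$ large, we may write
$$ F_{n}^{(I)}(x)=\sum_{\omega^{4}=x}c_{\omega}(x)\,S_{n}^{(I)}(\omega), $$
with coefficients $c_{\omega}(x)$ independent of $n$. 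It then suffices to prove that $c_{\sqrt[4]{x}}(x)=0$ and that the three remaining coefficients respect complex conjugation in the obvious way.

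The heart of the matter is an asymptotic estimate of $F_{n}^{(I)}(x)=\tfrac{1}{2\pi i}\int_{\Sigma}G_{n}(t)\,x^{t}\,dt$. After the limit $\beta\to\alpha=\nu=0$ in Proposition~\ref{I_int}, one has $G_{n}(t)=n!\,\Gamma(t+n+1)\Gamma(t-n)^{2}/\Gamma(t+1)^{4}$, and the Gamma factors cancel enough that the only singularities of the integrand are the double poles at $t=0,1,\dots,n$; hence $\Sigma$ may be taken as a positively oriented loop around $[0,n]$. I would deform this loop so that its upper and lower arcs pass through the two non-real roots $t_{\pm}=\pm i\,x^{1/4}n^{3/4}$ of $t^{4}=n^{3}x$. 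Expanding $G_{n}(t)x^{t}$ by Stirling's formula — retaining the $t^{2}/n$ and $t^{3}/n^{2}$ terms in $\log\Gamma(n+1\pm t)$, which produce exactly the $n^{1/2}$ and $n^{1/4}$ contributions to the final exponent — and using the reflection identity $\Gamma(t-n)^{2}\Gamma(n+1-t)^{2}=\pi^{2}/\sin^{2}(\pi t)$ to see that the exponentially large factor coming from $t\log(n^{3}x)+4t-4t\log t$ is cancelled by $\sin^{-2}(\pi t)$ at $t_{\pm}$, one finds that the integrand at $t_{\pm}$ has modulus $\asymp x^{-1/2}n^{-3/2}\exp(\tfrac12 x^{1/2}n^{1/2})$, so a Gaussian contribution of width $\asymp n^{3/8}$ yields $F_{n}^{(I)}(x)=\BO(n^{-9/8}\exp(\tfrac12 x^{1/2}n^{1/2}))$. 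Moreover, $\Sigma$ may be chosen symmetric under $t\mapsto\bar t$, and the integrand $h(t)=G_{n}(t)x^{t}$ satisfies $h(\bar t)=\overline{h(t)}$ for $x>0$, so the integral equals $\tfrac1\pi$ times the imaginary part of the integral over the upper arc alone; consequently the leading term of $F_{n}^{(I)}(x)$ is a fixed real combination of $\text{Re}(S_{n}^{(I)}(i\sqrt[4]{x}))$ and $\text{Im}(S_{n}^{(I)}(i\sqrt[4]{x}))$.

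Granting this estimate, the proof closes quickly. Since $|S_{n}^{(I)}(\sqrt[4]{x})|\asymp n^{-9/8}\exp(4x^{1/4}n^{3/4}-\dots)$ grows strictly faster than $\BO(n^{-9/8}\exp(\tfrac12 x^{1/2}n^{1/2}))$, while $|S_{n}^{(I)}(\pm i\sqrt[4]{x})|$ and $|S_{n}^{(I)}(-\sqrt[4]{x})|$ do not, the expansion above forces $c_{\sqrt[4]{x}}(x)=0$. For the other coefficients, recall that $F_{n}^{(I)}(x)=F_{n;1}^{(I)}(x)+F_{n;2}^{(I)}(x)(\gamma+\ln x)$ is real for $x>0$ and that the recurrence has real coefficients. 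Choosing the basis so that $S_{n}^{(I)}(\pm\sqrt[4]{x})$ are real and $S_{n}^{(I)}(-i\sqrt[4]{x})=\overline{S_{n}^{(I)}(i\sqrt[4]{x})}$, the identity $\overline{F_{n}^{(I)}(x)}=F_{n}^{(I)}(x)$ together with uniqueness of the expansion gives $c_{-\sqrt[4]{x}}(x)\in\R$ and $c_{-i\sqrt[4]{x}}(x)=\overline{c_{i\sqrt[4]{x}}(x)}$; hence $c_{i\sqrt[4]{x}}(x)S_{n}^{(I)}(i\sqrt[4]{x})+c_{-i\sqrt[4]{x}}(x)S_{n}^{(I)}(-i\sqrt[4]{x})=2\,\text{Re}\big(c_{i\sqrt[4]{x}}(x)S_{n}^{(I)}(i\sqrt[4]{x})\big)\in\text{span}_{\R}\{\text{Re}(S_{n}^{(I)}(i\sqrt[4]{x})),\text{Im}(S_{n}^{(I)}(i\sqrt[4]{x}))\}$. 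Combined with $c_{\sqrt[4]{x}}(x)=0$ and $c_{-\sqrt[4]{x}}(x)S_{n}^{(I)}(-\sqrt[4]{x})\in\R\,S_{n}^{(I)}(-\sqrt[4]{x})$, this is exactly the claimed membership.

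The main obstacle is the saddle-point estimate itself. The poles $t=0,\dots,n$ lie densely on the real axis whereas the saddles sit at height $\asymp n^{3/4}\ll n$, so one must justify that the loop around $[0,n]$ can be deformed through $t_{\pm}$ and that the resulting steepest-descent contour can be closed off — in particular near $t=0$, where $\log t$ is singular, and near $t=n$ — without the integrand growing; this requires a global analysis of $\text{Re}\big(t\log(n^{3}x)+4t-4t\log t-\log\sin^{2}(\pi t)\big)$. One also needs uniformity of Stirling's expansion for $\Gamma(n+1\pm t)$ on the scale $|t|\asymp n^{3/4}$, since the $t^{2}/n$ and $t^{3}/n^{2}$ terms are not negligible there. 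As a by-product, this same estimate — combined with $F_{n;2}^{(I)}(x)\asymp S_{n}^{(I)}(\sqrt[4]{x})$ from Proposition~\ref{I_AQ_denom} — also yields the error bound asserted in Theorem~\ref{Res_EC}.
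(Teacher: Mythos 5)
Your proposal is correct and follows essentially the paper's own route: a saddle-point analysis of the contour integral of Proposition~\ref{I_int} at the four scaled saddles $\omega n^{3/4}$, $\omega^4=x$, with the crucial point that the positive-real saddle contributes nothing because the loop around the poles $t=0,\dots,n$ can only be deformed through the non-real saddles. Your way of finishing --- an upper bound of order $n^{-9/8}\exp(\tfrac12 x^{1/2}n^{1/2})$ forcing the coefficient of $S_n^{(I)}(\sqrt[4]{x})$ in the exact recurrence-basis expansion to vanish, plus the conjugation argument --- merely makes explicit the concluding step the paper states directly, and it rests on the same contour-deformation and uniform-Stirling details that the paper likewise treats informally.
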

\begin{proof}
By making use of the relation $(-t)_{n+1} = (-1)^{n+1} (t-n)_{n+1}$, we may write the contour integral in Proposition \ref{I_int} as 
    $$F_n^{(I)}(x) = n! \int_{\Sigma} f_n(t) x^t \frac{dt}{2\pi i},\quad f_n(t) = \frac{\Gamma(t+n+1)\Gamma(t-n)^2}{\Gamma(t+1)^4}.$$
Afterwards, we perform a change of variables of the form $t\mapsto n^{\alpha} s$ for some $0<\alpha<1$ that will be chosen later. We will now approximate the gamma functions in the integrand via Stirling's formula
    $$\Gamma(s+a) \sim (2\pi)^{\frac{1}{2}} e^{-s} s^{s+a-\frac{1}{2}},\quad |s|\to\infty,\ \arg(s) < \pi. $$
The notation $f(t)\sim g(t)$ as $t\to\infty$ means that $\lim_{t\to\infty }f(t)/g(t) = 1$. We find that
$$f_n(n^{\alpha}s) \sim \frac{1}{(2\pi)^{\frac{1}{2}}} e^{n^{\alpha}s+n} \frac{(n^{\alpha}s+n)^{n^{\alpha}s+n+\frac{1}{2}} (n^{\alpha}s-n)^{2n^{\alpha}s-2n-1}}{(n^{\alpha}s)^{4n^{\alpha}s+2}},\quad n\to\infty. $$
By collecting some factors of powers of $n$ in the ratio, we may rewrite it as 
$$ n^{(3-4\alpha)n^{\alpha}s - n - \frac{1}{2} -2\alpha} \frac{(1+n^{\alpha-1}s)^{n^{\alpha}s+n+\frac{1}{2}} (n^{\alpha-1}s-1)^{2n^{\alpha}s-2n-1}}{s^{4n^{\alpha}s+2}}. $$
Since in what follows, we would like to apply the saddle point method, it makes sense to take $\alpha=3/4$ so that the $n^\alpha \ln n$ term in the exponent disappears. Doing so yields,
$$f_n(n^{3/4}s) \sim \frac{e^{n}}{(2\pi)^{\frac{1}{2}} n^{n+2}} e^{n^{3/4}s} \frac{(1+n^{-1/4}s)^{n^{3/4}s+n+\frac{1}{2}} (n^{-1/4}s-1)^{2n^{3/4}s-2n-1}}{s^{4n^{3/4}s+2}}, $$
which we may simplify to
$$f_n(n^{3/4}s;x) \sim - \frac{n^{-3/2}}{n!} e^{n^{3/4}s} \frac{(1+n^{-1/4}s)^{n^{3/4}s+n} (1-n^{-1/4}s)^{2n^{3/4}s-2n}}{s^{4n^{3/4}s+2}}, $$
by applying Stirling's formula to $n!$ and by making use of the fact that $(1+n^{-1/4}s)^{\frac{1}{2}}\sim 1$ and $(n^{-1/4}s-1)^{-1}\sim -1$ as $n\to\infty$. Therefore,
$$ n! n^{3/4} f_n(n^{3/4}s) x^{n^{3/4}s} \sim - s^{-2} n^{-3/4} e^{n^{3/4}\p_n(s)},  $$
where
    $$\p_n(s) = s(1-4\ln s+\ln x) + (s+n^{1/4})\ln(1+n^{-1/4}s) + 2 (s-n^{1/4}) \ln(1-n^{-1/4}s).$$  
We can use Maple to look for approximate saddle points $s^\ast_n = a+bn^{-1/4}+cn^{-1/2}+O(n^{-3/4})$ such that $\p_n'(s^\ast_n)=O(n^{-3/4})$ as $n\to\infty$ (to this end, we will expand the logarithms). If we would only consider leading terms, we would have to look for the actual saddle points of $\p(s)=s(4-4\ln s+\ln x)$, which are given by the solutions of $s^4=x$. In general, we get a system of equations that we can solve for the values of the parameters. We then find four of such approximate saddle points
    $$s^\ast_n(\omega) = \omega - \frac{1}{4} \omega^2 n^{-1/4} - \frac{9}{32} \omega^3 n^{-1/2} + O(n^{-3/4}),\quad \omega^4=x, $$
and we have
$$\p_n(s^\ast_n(\omega)) = 4\omega -\frac{1}{2} \omega^2 n^{-1/4} - \frac{3}{8} \omega^3 n^{-1/2} + O(n^{-3/4}). $$
By applying the (modified) saddle point method, we can then show that $F_n^{(I)}(x)$ is asymptotic to an appropriate linear combination of $n^{-9/8}\exp\left(n^{3/4}\p_n(s^\ast_n(\omega))\right)$ for $\omega^4=x$. Observe that these possible asymptotics are exactly those found in Theorem \ref{I_AQ_BT} via the Birkhoff-Trjitzinsky theory. The specific asymptotics that appear in the combination here depend on the approximate saddle points that the curve of steepest descent, which the contour $n^{-3/4}\Sigma$ needs to be deformed to, passes through. Note that the contour $n^{-3/4}\Sigma$ can't be deformed to a curve that passes through the approximate saddle point $s_n^\ast(x^{1/4})$ on the positive real line because then it would need to cross the strip $[0,n^{1/4}]$ inside $n^{-3/4}\Sigma$ where $f_n(t)$ has its poles. Hence we must have the stated result. 
\end{proof}

\section{Approximants for the Gompertz constant}

In this section, we will prove Theorem \ref{Res_GC} and provide some motivation for the precise construction of the approximants.

\subsection{Motivation} \label{GC_mot}

We will consider the mixed type functions that are dual to the ones used in the previous section. The relevant systems of weights are $(e^{-x},E_{\nu+1}(x))$ and $(x^{\alpha},x^{\beta})$, where $\alpha,\beta,\nu>-1$, as in \cite[Section 4.1]{VAWolfs}. The associated mixed type function 
$$F_{\vec{n},\vec{m}}^{(II\mid \alpha,\beta,\nu)}(x) = F_{\vec{n},\vec{m};1}^{(II\mid \alpha,\beta,\nu)}(x) e^{-x} + F_{\vec{n},\vec{m};2}^{(II\mid \alpha,\beta,\nu)}(x) E_{\nu+1}(x),$$
with $\deg F_{\vec{n},\vec{m};j}^{(II\mid \alpha,\beta,\nu)} = n_j-1 $, satisfies the orthogonality conditions
    $$ \int_0^\infty F_{\vec{n},\vec{m}}^{(II\mid \alpha,\beta,\nu)}(x) x^{k+\alpha} dx = 0,\quad k=0,\dots,m_1-1, $$
    $$ \int_0^\infty F_{\vec{n},\vec{m}}^{(II\mid \alpha,\beta,\nu)}(x) x^{k+\beta} dx = 0,\quad k=0,\dots,m_2-1. $$
We will again use the mixed type functions directly to generate the approximants. This is motivated by the fact that the system of weights $(e^{-x},E_{\nu+1}(x))$ forms a Nikishin system on $(0,\infty)$, since
    $$e^{x} E_{\nu+1}(x) = \frac{1}{\Gamma(\nu+1)} \int_{-\infty}^0 \frac{(-t)^\nu e^{t}}{x-t} dt, $$
see \cite[Eq. 3.383.10]{DLMF}. In that case, we can expect that $F_{\vec{n};1}^{(II\mid \alpha,\beta,\nu)}/F_{\vec{n};2}^{(II\mid \alpha,\beta,\nu)} \to e^{x} E_{\nu+1}(x)$ as $\sz{n}\to\infty$, e.g., along the diagonal, see \cite[Theorem 1.4]{Lop-Per}.
\medbreak

Explicit expressions for the Mellin transforms of the mixed type functions were determined in \cite[Section 4.1]{VAWolfs} for appropriate multi-indices. This leads to the integral representation below.
    
\begin{prop} \label{II_int}
Suppose that $\sz{n}=\sz{m}+1$ and $n_1+1\geq n_2$. Assume that $\alpha-\beta,\nu\not\in\Z$. Then,
   $$ F_{\vec{n},\vec{m}}^{(II\mid \alpha,\beta,\nu)}(x) = \int_{\mathcal{C}} \frac{(\alpha-s)_{m_1} (\beta-s)_{m_2}}{(s+\nu+1)_{n_2}} \frac{\Gamma(s+1)}{x^{s+1}} \frac{ds}{2\pi i}, $$
   in terms of a counterclockwise contour $\mathcal{C}$ that encloses $(-\infty,0]$ exactly once.
\end{prop}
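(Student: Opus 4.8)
The plan is to derive the integral representation in Proposition \ref{II_int} directly from the explicit Mellin transform formulas for the mixed type functions established in \cite[Section 4.1]{VAWolfs}, paralleling the way Proposition \ref{L_I_int} and Proposition \ref{I_int} were obtained. First I would recall from \cite[Section 4.1]{VAWolfs} the closed form of the Mellin transform $\int_0^\infty F_{\vec{n},\vec{m}}^{(II\mid \alpha,\beta,\nu)}(x)\, x^{s}\, dx$ for the multi-indices in the admissible range $\sz{n}=\sz{m}+1$, $n_1+1\geq n_2$, which should be a ratio of Pochhammer symbols and a $\Gamma$-factor of the shape $\Gamma(s+1)(\alpha-s)_{m_1}(\beta-s)_{m_2}/(s+\nu+1)_{n_2}$ evaluated at the appropriate shift of $s$. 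The orthogonality conditions listed just above the proposition (vanishing against $x^{k+\alpha}$ and $x^{k+\beta}$) are exactly the statement that this Mellin transform has prescribed zeros, which pins down its form up to the normalization already fixed in \cite{VAWolfs}.

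Next I would apply the Mellin inversion formula: $F_{\vec{n},\vec{m}}^{(II\mid \alpha,\beta,\nu)}(x) = \frac{1}{2\pi i}\int_{c-i\infty}^{c+i\infty} \widehat{F}(s)\, x^{-s-1}\, ds$ along a vertical line $\mathrm{Re}(s)=c$ chosen in the fundamental strip where the transform converges (this requires $\alpha-\beta,\nu\notin\Z$ so that the relevant poles and zeros are simple and in generic position). The assumption $\sz{n}=\sz{m}+1$ is what guarantees that the integrand decays fast enough at $s\to\pm i\infty$ for the inversion integral to converge and for the contour to be deformable: the numerator $(\alpha-s)_{m_1}(\beta-s)_{m_2}$ has degree $m_1+m_2=\sz{m}$ in $s$, while the denominator together with $\Gamma(s+1)$ contributes enough decay (via Stirling) precisely because $n_2 + (\text{effective gamma growth})$ exceeds $\sz{m}$ under $\sz{n}=\sz{m}+1$ and $n_1+1\geq n_2$. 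I would then push the vertical line to the left, collecting no residues from the region enclosing $(-\infty,0]$ in the sense that the closed contour $\mathcal{C}$ wrapping around the negative real axis captures exactly the poles of $\Gamma(s+1)$ at $s=-1,-2,\dots$ (and any poles of the Pochhammer ratio located there), which reproduces the polynomial-times-$e^{-x}$ and $E_{\nu+1}(x)$ structure of the mixed type function.

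The main obstacle I anticipate is bookkeeping rather than conceptual: verifying that the residue sum over the contour $\mathcal{C}$ reassembles into exactly $F_{\vec{n},\vec{m};1}^{(II\mid \alpha,\beta,\nu)}(x) e^{-x} + F_{\vec{n},\vec{m};2}^{(II\mid \alpha,\beta,\nu)}(x) E_{\nu+1}(x)$ with the correct polynomial coefficients of degree $n_j-1$, and checking that the condition $n_1+1\geq n_2$ (together with $\sz{n}=\sz{m}+1$) is precisely what is needed both for the explicit Mellin formula from \cite{VAWolfs} to be valid and for the contour manipulation to be justified without extra residue contributions. Concretely I would: (i) quote the Mellin transform formula and its fundamental strip from \cite[Section 4.1]{VAWolfs}; (ii) invoke Mellin inversion on the admissible vertical line; (iii) justify closing the contour to $\mathcal{C}$ by a Stirling estimate on the integrand's decay, using the degree count above; and (iv) note that no further simplification is needed since the resulting contour integral is the claimed representation. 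Since the representations in Propositions \ref{L_I_int} and \ref{I_int} were obtained by the analogous residue-theorem argument in the reverse direction, the consistency check is to confirm that applying the residue theorem to the claimed $\mathcal{C}$-integral recovers the known explicit Mellin transforms, which closes the argument.
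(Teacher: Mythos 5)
Your proposal follows essentially the same route as the paper: quote the explicit Mellin transform of the mixed type function from \cite[Lemma 4.1]{VAWolfs}, apply Mellin inversion on a vertical line, shift $s$, and deform the contour to a loop $\mathcal{C}$ around $(-\infty,0]$, with your Stirling/decay and residue-bookkeeping remarks simply filling in details the paper leaves implicit. This is correct and matches the paper's proof.
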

\begin{proof}
It is known that the Mellin transform of $F_{\vec{n},\vec{m}}^{(II\mid \alpha,\beta,\nu)}(x) e^{-x}$ is given by
$$ \int_0^\infty F_{\vec{n},\vec{m}}^{(II\mid \alpha,\beta,\nu)}(x) e^{-x} x^{s-1} dx = \Gamma(s) \frac{(\alpha+1-s)_{m_1} (\beta+1-s)_{m_2}}{(s+\nu)_{n_2}},$$
see \cite[Lemma 4.1]{VAWolfs}. After taking the inverse Mellin transform, performing the change of variables $s\mapsto s+1$ and deforming the associated contour, we can obtain the stated result.
\end{proof}

Notice the duality between the integral representation in both settings (see Proposition \ref{I_int} and Proposition \ref{II_int}): the integrands are essentially each other's reciprocate after swapping the roles of $\vec{n}$ and ${\vec{m}}$.
\medbreak

In what follows, we will only consider the limiting case $\beta\to\alpha$ and take $\alpha=\nu=0$. For convenience, we will restrict to multi-indices $\vec{n} = (n,n+1)$ and $\vec{m} = (n,n)$. The approximants then arise as $F_{n}^{(II)}(x) = - e^{x} F_{(n,n+1),(n,n)}^{(II\mid 0,0,0)}(x)/n!$. 

\subsection{Quality} \label{GC_proof}

\textbf{Diophantine quality.} We will first obtain an explicit expression for the denominators of the approximants. 

\begin{prop} \label{II_form}
Denote $F_{n}^{(II)}(x) = - e^{x} F_{(n,n+1),(n,n)}^{(II\mid 0,0,0)}(x)/n!$. Then,
    $$F_{n}^{(II)}(x) = F_{n;1}^{(II)}(x) - F_{n;2}^{(II)}(x) e^{x} E_{1}(x),$$
where
$$\begin{aligned}
    F_{n;2}^{(II)}(x) &= \sum_{l=0}^{n} \binom{n}{l} \binom{n+l}{l}^2 \frac{x^l}{l!}.
\end{aligned}$$
\end{prop}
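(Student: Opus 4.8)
The plan is to isolate $F_{n;2}^{(II)}(x)$ directly from the contour integral of Proposition~\ref{II_int}. First I would take the limiting case $\alpha=\beta=\nu=0$ of that representation with $m_1=m_2=n$, $n_2=n+1$ — obtained from the $\beta\to\alpha$ procedure already used in Section~\ref{L_I}, and legitimate because the integrand depends continuously on the parameters — to get
$$ F_{(n,n+1),(n,n)}^{(II\mid 0,0,0)}(x)=\frac{1}{2\pi i}\int_{\mathcal C}\Gamma(s+1)\,\frac{(-s)_n^2}{(s+1)_{n+1}}\,x^{-s-1}\,ds,\qquad F_n^{(II)}(x)=\frac{e^x}{n!}\,F_{(n,n+1),(n,n)}^{(II\mid 0,0,0)}(x), $$
with $\mathcal C$ a contour around $(-\infty,0]$. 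Using $\Gamma(s+1)=\Gamma(s+n+2)/(s+1)_{n+1}$, the integrand becomes $\Gamma(s+n+2)(-s)_n^2\,x^{-s-1}/(s+1)_{n+1}^2$, whose poles inside $\mathcal C$ are double poles at $s=-1,\dots,-(n+1)$ (the zeros of $(s+1)_{n+1}^2$, where $\Gamma(s+n+2)$ and $(-s)_n^2$ are regular and nonzero) and simple poles at $s=-n-2,-n-3,\dots$ coming from $\Gamma(s+n+2)$; the factor $(-s)_n^2$ only adds zeros at the non-negative integers and is harmless.

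The point is that $F_{n;2}^{(II)}(x)$ can be read off without summing the whole residue series. By definition $F_{(n,n+1),(n,n)}^{(II\mid 0,0,0)}(x)=n!\,F_{n;1}^{(II)}(x)\,e^{-x}+n!\,F_{n;2}^{(II)}(x)\,E_1(x)$, and since $E_1(x)=-\gamma-\ln x+(\text{entire function})$ while $e^{-x}$ is entire, the coefficient of $\ln x$ in the integral above equals $-n!\,F_{n;2}^{(II)}(x)$. Now a logarithm can only be produced by differentiating $x^{-s-1}$, hence only at the double poles, so the $\ln x$-part of the integral is the finite sum of the residues at $s=-1,\dots,-(n+1)$, and no convergence question arises. (Collecting instead the non-logarithmic parts of those residues together with the simple-pole residues would likewise recover $F_{n;1}^{(II)}(x)$, which however is not needed here.)

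It then remains to evaluate the $\ln x$-parts of these finitely many residues. At the double pole $s=-(j+1)$, $j=0,\dots,n$, this part equals $-x^{j}\ln x$ times the regular value $\bigl[(s+j+1)^2\,\Gamma(s+n+2)\,(-s)_n^2/(s+1)_{n+1}^2\bigr]_{s=-(j+1)}$; evaluating the gamma value and the Pochhammer symbols there gives $(n-j)!\cdot\bigl((n+j)!/j!\bigr)^2\big/\bigl(j!\,(n-j)!\bigr)^2=((n+j)!)^2/\bigl((j!)^4(n-j)!\bigr)$, so that $n!\,F_{n;2}^{(II)}(x)=\sum_{j=0}^{n}((n+j)!)^2\,x^{j}/\bigl((j!)^4(n-j)!\bigr)$. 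Dividing by $n!$ and using $((n+j)!)^2/\bigl((j!)^4\,n!\,(n-j)!\bigr)=\binom nj\binom{n+j}j^{2}/j!$ gives the claimed formula. The only genuinely delicate part is the bookkeeping — justifying the limiting form of the integral and keeping the factorial simplifications straight; conceptually the proof is short, the idea being that $F_{n;2}^{(II)}$ is precisely the $\ln x$-coefficient, to which only the finitely many double poles contribute.
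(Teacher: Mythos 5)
Your computation is correct and reaches the stated formula, but your route differs from the paper's. The paper never returns to the contour integral: it quotes the Mellin transform identity \eqref{II_MT} from \cite[Lemma 4.1]{VAWolfs}, in which the only poles on the left come from the terms $F_{n;2}^{(II)}[k](s)_k/(s+k)$ at $s=-k$, $k=0,\dots,n$, and reads off $F_{n;2}^{(II)}[k]$ by matching residues with $\frac{1}{n!}(1-s)_n^2/(s)_{n+1}$; since this is an identity of rational functions in $s$, no contour, no confluent limit and no convergence issue enters. You instead invert to the Mellin--Barnes representation of Proposition \ref{II_int} at $\alpha=\beta=\nu=0$ and extract the coefficient of $\ln x$ from the double poles at $s=-1,\dots,-(n+1)$; your residue bookkeeping and the binomial simplification are correct, and the method nicely parallels the treatment of the type I side (Proposition \ref{L_I_form}). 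Two steps deserve more than the single sentence you give them: (i) Proposition \ref{II_int} is stated for $\alpha-\beta,\nu\notin\Z$, so the case you use must be justified as a confluent limit of the mixed type functions themselves, not merely by continuity of the integrand (the paper does use the representation at these parameters later, so this is consistent with its conventions, but it is an extra step); (ii) the claim that ``no convergence question arises'' is too quick --- to identify the $\ln x$-part of the integral with the finite sum over double poles you must either justify the full residue expansion of the Hankel-type integral (factorial decay of $\Gamma(s+1)$ via the reflection formula suffices) or, more cleanly, consider $F(xe^{2\pi i})-F(x)$, whose integrand carries the factor $e^{-2\pi i s}-1$ that cancels all the simple poles and leaves only finitely many (now simple) poles, giving $-2\pi i\,n!F_{n;2}^{(II)}(x)$ directly. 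With either patch your argument is complete; the paper's version buys brevity by staying on the Mellin side, yours buys a self-contained derivation from Proposition \ref{II_int} alone.
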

\begin{proof}
It was shown in \cite[Lemma 4.1]{VAWolfs} that the Mellin transform of $F_{n}^{(II)}(x) e^{-x}$ is given by
    $$ \int_0^\infty F_{n}^{(II)}(x) e^{-x} x^{s-1} dx  = - \frac{1}{n!} \Gamma(s) \frac{(1-s)_n^2}{(s)_{n+1}} . $$
We then use the definition of $F_{n}^{(II)}(x)$ to write
$$F_{n}^{(II)}(x) e^{-x} = \sum_{k=0}^{n-1} F_{n;1}^{(II)}[k] x^k e^{-x} -  \sum_{k=0}^{n} F_{n;2}^{(II)}[k] x^k E_1(x).$$ 
Note that, given a polynomial $p(x)\in\R[x]$, we have written $p[k]=p^{(k)}(0)/k!$ so that $p(x) = \sum_{k=0}^{\deg p} p[k] x^k$ (this notation will also appear in the rest of the paper). We have
    $$ \int_0^\infty x^{s+k-1} e^{-x} dx = \Gamma(s+k),\quad \int_0^\infty x^{s+k-1} E_1(x) dx = \int_1^\infty \frac{\Gamma(s+k)}{t^{s+k-1}} dt =   \frac{\Gamma(s+k)}{s+k}, $$
so that after dividing by $\Gamma(s)$, we obtain
\begin{equation} \label{II_MT}
    \sum_{k=0}^{n-1} F_{n;1}^{(II)}[k] (s)_k -  \sum_{k=0}^{n} F_{n;2}^{(II)}[k] \frac{(s)_k}{s+k} = - \frac{1}{n!} \frac{(1-s)_n^2}{(s)_{n+1}}.
\end{equation}
We can recover the coefficients $F_{n;2}^{(II)}[k]$ of $F_{n;2}^{(II)}(x)$ by comparing the poles at $s=-k$ for $k\in\{0,\dots,n\}$. Indeed,
    $$ \text{Res}_{s=-k}\left(\sum_{k=0}^{n-1} F_{n;1}^{(II)}[k] (s)_k -  \sum_{k=0}^{n} F_{n;2}^{(II)}[k] \frac{(s)_k}{s+k} \right) = \text{Res}_{s=-k}\left( - \frac{1}{n!} \frac{(1-s)_n^2}{(s)_{n+1}} \right) $$
and thus
$$ F_{n;2}^{(II)}[k] (-k)_k = \frac{1}{n!} \frac{(k+1)_n^2}{(-k)_{k}(1)_{n-k}}. $$
Elementary manipulations of the Pochhammer symbol, namely
    $$ (-k)_k = (-1)^k k!,\quad (k+1)_n = \binom{n+k}{k} n!,\quad (1)_{n-k}=(n-k)!, $$
then lead to the desired result.
\end{proof}

Observe that the denominators $F_{n;2}^{(II)}(1)=\sum_{l=0}^{n} \binom{n}{l} \binom{n+l}{l}^2 \frac{1}{l!}$ are similar to the denominators $\sum_{l=0}^{n} \binom{n}{l}^2 \binom{n+l}{l}^2$ used in Apéry's irrationality proof \cite{Apery} of $\zeta(3)$, but that a factor $n!/(n-l)!$ is missing from the $l$-th term.
\medbreak

We will now show that the approximants have the Diophantine properties below. In principle, it is also possible to use \eqref{II_MT} to find an explicit expression for the numerators $F_{n;1}^{(II)}(x)$ and to then study its Diophantine properties, however this would lead to a sub-optimal result.

\begin{prop}
    $ n! F_{n;1}^{(II)}(x), n! F_{n;2}^{(II)}(x) \in\Z[x]. $
\end{prop}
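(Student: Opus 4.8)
The plan is to dispatch the denominator immediately and to obtain the numerator claim from \eqref{II_MT} by a purely structural argument, without ever writing down a closed form for $F_{n;1}^{(II)}(x)$ (which, as noted above, would introduce a spurious factor $\text{lcm}(1,\dots,n)$). For the denominators there is nothing to do: by Proposition~\ref{II_form} the coefficient of $x^l$ in $n!\,F_{n;2}^{(II)}(x)$ is $\frac{n!}{l!}\binom nl\binom{n+l}{l}^2$, an integer since $0\le l\le n$.

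For the numerators I would argue as follows. Write $F_{n;j}^{(II)}(x)=\sum_k F_{n;j}^{(II)}[k]x^k$ and multiply \eqref{II_MT} by $n!$, so its right-hand side becomes $(1-s)_n^2/(s)_{n+1}$. Since $(s)_{n+1}=\prod_{j=0}^n(s+j)$ is monic, Euclidean division gives $(1-s)_n^2=(s)_{n+1}P(s)+R(s)$ with $P(s),R(s)\in\Z[s]$ and $\deg R\le n$. On the left-hand side I first rewrite each $(s)_k/(s+k)$: because $(s)_k$ takes the value $(-k)(-k+1)\cdots(-1)=(-1)^k k!$ at $s=-k$, the polynomial $(s)_k-(-1)^k k!\in\Z[s]$ is divisible by the monic polynomial $s+k$, with quotient $q_k(s)\in\Z[s]$, so that $(s)_k/(s+k)=q_k(s)+(-1)^k k!/(s+k)$. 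Substituting this and separating polynomial parts from proper-rational parts — on both sides the poles are confined to $\{0,-1,\dots,-n\}$, so the decomposition is unique — yields
\[
\sum_k n!\,F_{n;1}^{(II)}[k]\,(s)_k \;=\; P(s)\;-\;\sum_k n!\,F_{n;2}^{(II)}[k]\,q_k(s),
\]
while matching the proper-rational parts merely re-derives the formula for $F_{n;2}^{(II)}[k]$ of Proposition~\ref{II_form}. Now $P(s)\in\Z[s]$, each $q_k(s)\in\Z[s]$, and $n!\,F_{n;2}^{(II)}[k]=\frac{n!}{k!}\binom nk\binom{n+k}{k}^2\in\Z$ by the denominator case; hence the right-hand side lies in $\Z[s]$. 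Since the rising factorials $\{(s)_k\}_{k\ge 0}$ form a $\Z$-basis of $\Z[s]$ (each is monic of degree $k$), every coefficient $n!\,F_{n;1}^{(II)}[k]$ is an integer, i.e.\ $n!\,F_{n;1}^{(II)}(x)\in\Z[x]$.

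I do not foresee a genuine obstacle: the only non-bookkeeping ingredient is the elementary residue computation already needed for Proposition~\ref{II_form}, and the whole argument rests on the two observations that a quotient of integer polynomials by a monic integer polynomial stays in $\Z[s]$ and that $n!/k!$, rather than $\text{lcm}(1,\dots,n)$, suffices to clear $F_{n;2}^{(II)}[k]$ when $k\le n$. The subtlest point is the separation into polynomial and proper-rational parts; it is valid by uniqueness of partial fractions, but only after the preliminary rewriting $(s)_k/(s+k)=q_k(s)+(-1)^k k!/(s+k)$, which is exactly what makes the polynomial part of the left-hand side equal to $\sum_k n!F_{n;1}^{(II)}[k](s)_k+\sum_k n!F_{n;2}^{(II)}[k]q_k(s)$. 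The fact that $P(s)$ and the $q_k(s)$ land in $\Z[s]$ — ultimately a consequence of the monicity of $(s)_{n+1}$ and of the factors $s+k$ — is precisely why no $\text{lcm}$ appears; forming instead an explicit sum for $F_{n;1}^{(II)}[k]$ in the monomial basis hides this, and is the sub-optimal route alluded to above.
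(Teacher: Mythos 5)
Your proof is correct and takes essentially the same route as the paper: both arguments exploit \eqref{II_MT}, isolate the polynomial part via Euclidean division by monic integer polynomials (you split each $(s)_k/(s+k)$ term by term, while the paper divides the combined numerator $b(s)$ by $(s)_{n+1}$ in one step), invoke $n!\,F_{n;2}^{(II)}[k]\in\Z$ from Proposition \ref{II_form}, and conclude via the integrality of the change of basis between $\{(s)_k\}$ and the monomials. No gaps.
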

\begin{proof}
    The result about $F_{n;2}^{(II)}(x)$ follows easily from Proposition \ref{II_form}. In order to obtain the result for $F_{n;1}^{(II)}(x)$, we use the Euclidean division algorithm to find unique polynomials $q(s),r(s)\in \Z[s]$ of degree $n-1$ and $n$ respectively such that
        $$\frac{(1-s)_n^2}{(s)_{n+1}} = q(s) + \frac{r(s)}{(s)_{n+1}}. $$
    On the other hand, recall from \eqref{II_MT} that
    $$\frac{(1-s)_n^2}{(s)_{n+1}} = a(s) + \frac{b(s)}{(s)_{n+1}}, $$
    with 
    $$a(s) = \sum_{k=0}^{n-1} n! F_{n;1}^{(II)}[k] (s)_k,\quad b(s) = - \sum_{k=0}^{n} n! F_{n;2}^{(II)}[k] (s)_k \prod_{j=0,j\neq k}^{n} (s+j). $$
    Since $n! F_{n;2}^{(II)}[k]\in\Z$, Euclidean division again gives polynomials $b_q(s),b_r(s)\in \Z[s]$ of degree $n-1$ and $n$ respectively such that
    $$\frac{(1-s)_n^2}{(s)_{n+1}} = a(s) + b_q(s) + \frac{b_r(s)}{(s)_{n+1}}.$$
    We thus have $a(s) = q(s) - b_q(s)$ and therefore $a(s)\in\Z[s]$. The expansion $s^k=\sum_{l=0}^k c_{k,l} (s)_l$ has all $c_{k,l}\in\Z$, which then shows that $n! F_{n;1}^{(II)}[k]\in\Z$ as well.
\end{proof}  

\bigbreak

\textbf{Approximation quality.} We will use the same approach as with the approximants of $\gamma+\ln x$. We will start by generating a recurrence relation for the denominators $F_{n;2}^{(II)}(x)$ via the \textit{Zeilberger}-command in Maple (see \cite{A=B}) in order to study its asymptotics.

\begin{prop}
    The denominators $F_{n;2}^{(II)}(x)$ arise as solutions of a five-term recurrence relation $\sum_{k=0}^4 c_{n,k}(x) S_{n+k}(x)=0$ in which $c_{n,k}(x)\in\Z[x,n]$.
    For $x=1$, the coefficients are given by
    $$c_{n,0}(1) = -(n + 1)^3(n + 2)(729n^4 + 8190n^3 + 33657n^2 + 59656n + 38232), $$
    $$\begin{aligned}
        c_{n,1}(1) = (n + 2)&(2916n^7 + 51714n^6 + 385080n^5 + 1559443n^4 \\
        & + 3708086n^3 + 5178913n^2 + 3937800n + 1259352), \\
    \end{aligned} $$
    $$\begin{aligned}
        c_{n,2}(1) = &-4374n^8 - 43308n^7 - 17703n^6 + 1495014n^5 + 8678747n^4 \\
        & + 23155644n^3 + 33243300n^2 + 24757376n + 7476768, \\
    \end{aligned}$$
    $$\begin{aligned}
        c_{n,3}(1) = (n + 3)&(2916n^7 + 64107n^6 + 558186n^5 + 2519599n^4 \\
        & + 6371356n^3 + 8961012n^2 + 6375808n + 1702816),
    \end{aligned}$$
    $$c_{n,4}(1) = -(n + 3)(n + 4)^3(729n^4 + 5274n^3 + 13461n^2 + 13996n + 4772).$$
\end{prop}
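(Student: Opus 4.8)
The plan is to obtain the recurrence by creative telescoping applied to the explicit hypergeometric representation in Proposition~\ref{II_form}. Write
$$t(n,l) = \binom{n}{l}\binom{n+l}{l}^2 \frac{x^l}{l!}, \qquad F_{n;2}^{(II)}(x) = \sum_{l\in\Z} t(n,l),$$
where the summation runs over all integers because the factor $\binom{n}{l}$ forces $t(n,l)=0$ for $l<0$ and $l>n$. The first step is to check that $t(n,l)$ is a proper hypergeometric term in the pair $(n,l)$ over the field $\Q(x)$, i.e. that both ratios $t(n+1,l)/t(n,l)$ and $t(n,l+1)/t(n,l)$ are rational functions of $n$, $l$ and $x$. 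By Zeilberger's algorithm (see \cite{A=B}) this guarantees the existence of an integer $J\geq 0$, polynomials $c_{n,0}(x),\dots,c_{n,J}(x)$, and a rational certificate $R(n,l,x)$ such that, with $G(n,l)=R(n,l,x)\,t(n,l)$,
$$\sum_{k=0}^{J} c_{n,k}(x)\, t(n+k,l) = G(n,l+1) - G(n,l)$$
identically in $n$ and $l$; after clearing denominators in $x$ and $n$ one may take $c_{n,k}(x)\in\Z[x,n]$.

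The second step is computational: running the \textit{Zeilberger}-command in Maple on $t(n,l)$ returns $J=4$, together with the coefficients $c_{n,k}(x)$ and the certificate $R(n,l,x)$. Summing the telescoped identity over $l\in\Z$ collapses the right-hand side: since $t(n+k,l)$ vanishes for $l<0$ and $l>n+k$, the left-hand sum is finite, and the right-hand side equals $\lim_{l\to+\infty}G(n,l) - \lim_{l\to-\infty}G(n,l)=0$ provided $G(n,l)$ vanishes outside the summation window, i.e. provided $R(n,l,x)$ introduces no pole at the integers $l$ on the boundary of (or inside) $\{0,\dots,n\}$; this is checked in the standard way for the certificate produced by the algorithm. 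One then obtains $\sum_{k=0}^{4} c_{n,k}(x)\,F_{n+k;2}^{(II)}(x)=0$, so the denominators are solutions of a five-term recurrence with coefficients in $\Z[x,n]$. Alternatively, and avoiding any discussion of boundary terms, one can verify the output directly: dividing the telescoped identity by $t(n,l)$ turns it into an identity between rational functions of $n$, $l$ and $x$, which is then a finite algebraic check. Finally, substituting $x=1$ into the coefficients returned by the algorithm yields the explicit polynomials $c_{n,0}(1),\dots,c_{n,4}(1)$ displayed in the statement; these can be independently sanity-checked by evaluating the recurrence at a handful of values of $n$ against the sum in Proposition~\ref{II_form}.

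The argument is entirely a finite computation, so there is no conceptual obstacle; the only real cost is the size of the symbolic output — the certificate $R(n,l,x)$ and the coefficients $c_{n,k}(x)$ are bulky polynomials in three variables — and the one point genuinely deserving attention is the justification that the telescoping boundary terms vanish over the full integer range, i.e. that $R(n,l,x)$ has no poles at the relevant integer values of $l$. This is handled either by the usual analysis of the Zeilberger certificate or simply bypassed by the direct rational-function verification described above.
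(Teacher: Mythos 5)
Your proposal is correct and matches the paper's approach: the recurrence is generated by applying Zeilberger's creative-telescoping algorithm (the \textit{Zeilberger} command in Maple, cf. \cite{A=B}) to the explicit hypergeometric sum for $F_{n;2}^{(II)}(x)$ from Proposition \ref{II_form}, with the coefficients then specialized at $x=1$. Your added care about the certificate's boundary behaviour and the rational-function verification is a sound way to make the computational step rigorous, but it is not a departure from the paper's method.
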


It can again be shown that the three other linear independent solutions of the recurrence relation are $F_{n;1}^{(II)}(x)$ and the Stieltjes transforms of $F_n^{(II)}$ with respect to each type II orthogonality measure, i.e.
$$E_{n;1}^{(II)}(x)=\int_0^\infty \frac{F_n^{(II)}(t)}{x-t} dt ,\quad E_{n;2}^{(II)}(x)=\int_0^\infty \frac{F_n^{(II)}(t)}{x-t} \ln t \, dt.$$
\medbreak

The Birkhoff-Trjitzinsky theory now leads to the following result.

\begin{prop} \label{II_AQ_BT}
    There are four solutions $\{S_n^{(II)}(\omega)\mid \omega^4=x \}$ that form a basis over $\C$ for the space of solutions of the recurrence relation for $F_{n;2}^{(II)}(x)$ with asymptotic behavior
    $$S_{n}^{(I)}(\omega) = n^{-9/8} \exp\left(4\omega n^{3/4} + \frac{1}{2}\omega^2 n^{1/2} - \frac{3}{8}\omega^3 n^{1/4} \right) \left(1+\BO(n^{-1/4})\right),\quad n\to\infty.$$
\end{prop}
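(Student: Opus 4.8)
The plan is to carry this out exactly as in the proof of Proposition~\ref{I_AQ_BT}, since only the explicit recurrence coefficients have changed. First I would read off the limiting characteristic polynomial of the five-term recurrence: for fixed $x>0$ the top-degree-in-$n$ coefficients of $c_{n,0}(x),\dots,c_{n,4}(x)$ are $-729$, $2916$, $-4374$, $2916$, $-729$ (independent of $x$, as all $x$-dependence sits in lower powers of $n$), so that polynomial is $-729(\lambda-1)^4$, with $\lambda=1$ a single quadruple root. By the extension of Poincar\'e's lemma in \cite{Pituk}, the $n$-th roots of every solution then converge to $1$ (or vanish) as $n\to\infty$, which lets me take $\alpha=\mu_0=0$ in the Birkhoff-Trjitzinsky expansion and search only for solutions of the form
$$n^{\beta}\exp\left(\mu_1 n^{3/4}+\mu_2 n^{1/2}+\mu_3 n^{1/4}\right)\bigl(1+\BO(n^{-1/4})\bigr),\qquad n\to\infty,$$
i.e.\ with $\rho=4$ and no logarithmic terms.

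Next I would fit this ansatz to the recurrence, following the strategy of \cite{WimpZeil} and its package \textit{AsyRec}~\cite{Zeil}. Substituting into the normalized relation $1+\sum_{k=1}^4\bigl(c_{n,k}(x)/c_{n,0}(x)\bigr)S_{n+k}(x)/S_n(x)=0$, writing
$$\frac{S_{n+k}(x)}{S_n(x)}=(1+k/n)^{\beta}\exp\left(\sum_{j=1}^{3}\mu_j\,n^{(4-j)/4}\bigl((1+k/n)^{(4-j)/4}-1\bigr)\right)\bigl(1+\BO(n^{-1/4})\bigr),$$
and expanding the powers of $1+k/n$ together with the rational functions $c_{n,k}/c_{n,0}$ to the order in $n^{-1/4}$ needed to determine all four parameters, I obtain a triangular system of equations for $\beta,\mu_1,\mu_2,\mu_3$. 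Its solutions come in a family indexed by the fourth roots of $x$, and I expect to find $\mu_1=4\omega$, $\mu_2=\tfrac{1}{2}\omega^2$, $\mu_3=-\tfrac{3}{8}\omega^3$, $\beta=-9/8$ with $\omega^4=x$. Note that the sign of the $n^{1/2}$-coefficient comes out opposite to the one in Proposition~\ref{I_AQ_BT}; this mirrors the duality between the integral representations in Propositions~\ref{I_int} and \ref{II_int}, whose integrands are essentially reciprocal.

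Finally I would invoke the Birkhoff-Trjitzinsky theorem itself: for a five-term recurrence whose coefficients admit Poincar\'e-type expansions there is a basis of four solutions with asymptotic expansions of Birkhoff-Trjitzinsky type, and the four formal solutions found above are exactly these, so they genuinely have the stated asymptotics. Since the leading rates $4\omega$ over $\omega^4=x$ are pairwise distinct for $x>0$, the $S_n^{(II)}(\omega)$ are linearly independent over $\C$ and hence span the four-dimensional solution space. I expect the only delicate point to be the same as in Proposition~\ref{I_AQ_BT}: justifying a priori that the expansion carries no $n^{\alpha n}$ factor and no logarithmic terms, i.e.\ that the $\rho=4$, $\alpha=\mu_0=0$ ansatz is the correct one. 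Here this follows from the \cite{Pituk} input together with the fact that the four branches $\mu_1=4\omega$ are distinct, so the characteristic structure splits into four simple branches and no resonance forces a logarithm; once this is in place the remaining computation is routine in Maple.
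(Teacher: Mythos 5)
Your proposal is correct and follows essentially the same route as the paper, whose proof of this proposition is literally an appeal to the method of Proposition~\ref{I_AQ_BT} (the Wimp--Zeilberger/\textit{AsyRec} fitting of a $\rho=4$, log-free Birkhoff--Trjitzinsky ansatz after normalizing the recurrence, with the parameters determined in Maple). Your added checks — the limiting characteristic polynomial $-729(\lambda-1)^4$, the use of \cite{Pituk} to set $\alpha=\mu_0=0$, and linear independence from the distinct leading rates $4\omega$ — are exactly the ingredients the paper relies on implicitly.
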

\begin{proof}
    This can be shown in the same way as Proposition \ref{I_AQ_BT}.
\end{proof}

The possible exponents for $x=1$ are summarized in the table below.
\begin{center}
\begin{tabular}{c||c|c|c}
    $S_n^{(II)}(\omega)$ & $n^{3/4}$ & $n^{1/2}$ & $n^{1/4}$ \\ \hline
    $1$ & $4$ & $1/2$ & $-3/8$ \\
    $i$ & $(4i)$ & $-1/2$ & $(3/8i)$ \\
    $-i$ & $(-4i)$ & $-1/2$ & $(-3/8i)$ \\
    $-1$ & $-4$ & $1/2$ & $3/8$ \\
\end{tabular}
\end{center}

Using the same strategy as before (by comparing with Laguerre polynomials in a negative argument, see Proposition \ref{I_AQ_denom}), we can associate a specific asymptotic behavior to the denominators.

\begin{prop}
Suppose that $x>0$. Then $F_{n,2}^{(II)}(x) \asymp S_{n}^{(I)}(\sqrt[4]{x})$ as $n\to\infty$.
\end{prop}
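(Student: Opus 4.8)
The plan is to copy the strategy of Proposition~\ref{I_AQ_denom} almost verbatim: produce an elementary lower bound for $F_{n;2}^{(II)}(x)$ that grows with $n$, and then use it to exclude the possibility that the dominant member of the basis from Proposition~\ref{II_AQ_BT} is missing from the expansion of $F_{n;2}^{(II)}(x)$.

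First I would use the positivity of the explicit formula in Proposition~\ref{II_form}. For $x>0$ every term of
$$F_{n;2}^{(II)}(x)=\sum_{l=0}^{n}\binom{n}{l}\binom{n+l}{l}^2\frac{x^l}{l!}$$
is positive, and since $\binom{n+l}{l}^2\geq 1$ we obtain
$$F_{n;2}^{(II)}(x)\geq \sum_{l=0}^{n}\binom{n}{l}\frac{x^l}{l!}=L_n(-x),$$
exactly the Laguerre polynomial in a negative argument used before. Perron's formula \cite[Theorem 8.22.3]{Szego} then gives $L_n(-x)\asymp n^{-1/4}\exp(2\sqrt{nx})$, so in particular $F_{n;2}^{(II)}(x)\to\infty$ as $n\to\infty$.

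Next, since $F_{n;2}^{(II)}(x)$ solves the five-term recurrence, it lies in the span of the basis $\{S_n^{(I)}(\omega)\mid\omega^4=x\}$ of Proposition~\ref{II_AQ_BT}. Writing $\omega=\sqrt[4]{x}>0$, the four basis solutions correspond to $\omega,i\omega,-i\omega,-\omega$, and reading off the real parts of the exponents shows that $S_n^{(I)}(\sqrt[4]{x})$ is the unique solution that does not decay: its modulus is of order $\exp(4\omega n^{3/4})$, the solutions for $\pm i\omega$ have modulus of order $\exp(-\tfrac12\omega^2 n^{1/2})$ (the $n^{3/4}$ term being purely imaginary and the $n^{1/2}$ coefficient becoming $-\tfrac12\omega^2$), and the one for $-\omega$ has modulus of order $\exp(-4\omega n^{3/4})$. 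Hence, were the coefficient of $S_n^{(I)}(\sqrt[4]{x})$ zero, $F_{n;2}^{(II)}(x)$ would be a combination of solutions all tending to $0$, contradicting $F_{n;2}^{(II)}(x)\geq L_n(-x)\to\infty$. The coefficient is therefore non-zero, and since the corresponding solution strictly dominates the other three we conclude $F_{n;2}^{(II)}(x)\asymp S_n^{(I)}(\sqrt[4]{x})$.

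The argument is in fact slightly cleaner than in the type~I case: there the solutions for $\pm i\sqrt[4]{x}$ grow like $\exp(\tfrac12 x^{1/2}n^{1/2})$, so one needs the strict inequality $2>\tfrac12$ to beat them, whereas here every competing solution already decays, so the mere divergence of the lower bound suffices. The only step requiring real care — the one I would double-check — is the sign bookkeeping in the exponents coming from Proposition~\ref{II_AQ_BT}; the decisive feature is that the $n^{1/2}$ coefficient has the opposite sign to the type~I family, which is precisely what makes $\omega=\sqrt[4]{x}$ the unique non-decaying solution and hence the one that dictates the asymptotics.
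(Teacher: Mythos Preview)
Your proposal is correct and follows exactly the approach the paper indicates: the paper does not spell out a proof here but simply points to Proposition~\ref{I_AQ_denom} (``comparing with Laguerre polynomials in a negative argument''), and you have reproduced that argument with the appropriate lower bound $F_{n;2}^{(II)}(x)\geq L_n(-x)$ coming from $\binom{n+l}{l}^2\geq 1$. Your closing observation that the type~II case is in fact easier---because the sign flip in the $n^{1/2}$ coefficient of Proposition~\ref{II_AQ_BT} makes the three competing basis solutions decay outright---is a correct and worthwhile refinement of the paper's terse reference.
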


A variation of the saddle point method allows us to study the asymptotics of the errors via the integral representation in Proposition \ref{II_int}, similarly as in Proposition \ref{I_AQ_error}.

\begin{prop}
Suppose that $x>0$. Then,
    $$F_{n}^{(I)}(x) \in\text{span}\{\text{Re}(S_{n}^{(I)}(i\sqrt[4]{x})),\text{Im}(S_{n}^{(I)}(i\sqrt[4]{x})),S_n^{(I)}(-\sqrt[4]{x})\}.$$     
\end{prop}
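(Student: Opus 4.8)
The statement concerns the type~I error $F_n^{(I)}(x)$ for $\gamma+\ln x$, whose only four candidate asymptotic behaviors are the basis solutions $S_n^{(I)}(\omega)$, $\omega^4=x$, of Proposition~\ref{I_AQ_BT}; the task is therefore to decide which of these four actually occur in the asymptotics of $F_n^{(I)}(x)$. As the assertion is identical to Proposition~\ref{I_AQ_error}, it is in fact already established; nonetheless I outline a self-contained derivation by the saddle point method applied to the integral representation of Proposition~\ref{I_int}. First I would specialise that proposition to $\alpha=\beta=\nu=0$, $\vec{n}=(n+1,n+1)$, $\vec{m}=(n+1,n)$ and use $(-t)_{n+1}=(-1)^{n+1}(t-n)_{n+1}$ to write
$$F_n^{(I)}(x)=\frac{n!}{2\pi i}\int_\Sigma f_n(t)\,x^t\,dt,\qquad f_n(t)=\frac{\Gamma(t+n+1)\,\Gamma(t-n)^2}{\Gamma(t+1)^4},$$
and then rescale $t\mapsto n^{3/4}s$. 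The exponent $3/4$ is dictated by the requirement that the spurious $n^{\alpha}\ln n$ contribution cancel after Stirling's formula is applied to each gamma factor, leaving
$$n!\,n^{3/4}f_n(n^{3/4}s)\,x^{n^{3/4}s}\sim -s^{-2}n^{-3/4}e^{n^{3/4}\varphi_n(s)},$$
with
$$\varphi_n(s)=s(1-4\ln s+\ln x)+(s+n^{1/4})\ln(1+n^{-1/4}s)+2(s-n^{1/4})\ln(1-n^{-1/4}s).$$

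Next I would locate the approximate saddle points. Expanding the logarithms and solving $\varphi_n'(s)=\BO(n^{-3/4})$ reduces at leading order to $s^4=x$, and a perturbative expansion in powers of $n^{-1/4}$ yields $s_n^\ast(\omega)=\omega-\tfrac14\omega^2 n^{-1/4}-\tfrac{9}{32}\omega^3 n^{-1/2}+\BO(n^{-3/4})$ for $\omega^4=x$, with $\varphi_n(s_n^\ast(\omega))=4\omega-\tfrac12\omega^2 n^{-1/4}-\tfrac38\omega^3 n^{-1/2}+\BO(n^{-3/4})$. Multiplying by $n^{3/4}$ reproduces exactly the exponents $4\omega n^{3/4}-\tfrac12\omega^2 n^{1/2}-\tfrac38\omega^3 n^{1/4}$ of the four solutions $S_n^{(I)}(\omega)$ in Proposition~\ref{I_AQ_BT}. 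I would then deform $n^{-3/4}\Sigma$ to a curve of steepest descent and apply the Laplace estimate at each saddle the deformed contour is forced to pass through, obtaining $F_n^{(I)}(x)$ as a linear combination of the corresponding $n^{-9/8}\exp(n^{3/4}\varphi_n(s_n^\ast(\omega)))$.

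The decisive and only genuinely delicate point is selecting which saddles contribute, and here the pole structure of $f_n$ is what matters. The rescaled contour $n^{-3/4}\Sigma$ encircles the rescaled poles of $f_n$, which after the substitution populate the segment $[0,n^{1/4}]$ on the positive real axis, and the real saddle $s_n^\ast(x^{1/4})$ lies inside this segment; the contour therefore cannot be slid across $s_n^\ast(x^{1/4})$ without crossing poles, so that saddle is excluded and only $\omega\in\{ix^{1/4},-ix^{1/4},-x^{1/4}\}$ remain. Since $F_n^{(I)}(x)$ is real for $x>0$, the two complex-conjugate saddles $s_n^\ast(\pm ix^{1/4})$ combine into $\text{Re}(S_n^{(I)}(i\sqrt[4]{x}))$ and $\text{Im}(S_n^{(I)}(i\sqrt[4]{x}))$, while $s_n^\ast(-x^{1/4})$ produces $S_n^{(I)}(-\sqrt[4]{x})$, giving exactly the asserted span. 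I expect the main obstacle to be the rigorous justification of the steepest-descent deformation in the region where the coalescing scaled poles approach the real saddle, together with uniform control of the $\BO(n^{-3/4})$ remainders in $\varphi_n$ through the Laplace integration; it is precisely this pole-avoidance geometry that excludes the exponentially growing $\omega=x^{1/4}$ term and pins down the three-dimensional span.
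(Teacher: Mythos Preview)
Your argument is essentially identical to the paper's own proof of Proposition~\ref{I_AQ_error}: the same contour integral representation, the same rescaling $t\mapsto n^{3/4}s$, the same Stirling expansion yielding $\varphi_n(s)$, the same approximate saddle points $s_n^\ast(\omega)$ with $\omega^4=x$, and the same geometric exclusion of the saddle $s_n^\ast(x^{1/4})$ because the deformed contour cannot cross the pole segment $[0,n^{1/4}]$. Your additional remark that real-valuedness pairs the two complex-conjugate saddles into $\text{Re}$ and $\text{Im}$ of $S_n^{(I)}(i\sqrt[4]{x})$ is a nice clarification that the paper leaves implicit.
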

\begin{proof}
It follows from Proposition \ref{II_int} that
$$F_n^{(II)}(x) = \frac{1}{n!} \int_{\mathcal{C}} g_n(s) x^{-s-1} \frac{ds}{2\pi i},\quad g_n(s) = \Gamma(s+1) \frac{(-s)_n^2}{(s+1)_{n+1}}.$$
The integrand $g_n(s)$ is essentially the reciprocate of the integrand $f_n(s)$ that is used in the contour integral representation for $F_{n}^{(I)}(x)$ (see Proposition \ref{I_int}). Indeed, we have
    $$g_n(s) =  \frac{1}{f_n(s)} \frac{1}{(s+n+1)(-s+n)^2} , $$
and thus
    $$  \frac{n^{3/4}}{n!} g_n(n^{3/4}t) x^{-n^{3/4}t-1} \sim - x^{-1} t^2 n^{-3/4} e^{-n^{3/4}\p_n(t)},  $$
with
    $$\p_n(t) = t(1-4\ln t+\ln x) + (t+n^{1/4})\ln(1+n^{-1/4}t) + 2 (t-n^{1/4}) \ln(1-n^{-1/4}t).$$  

By applying the (modified) saddle point method, we can show that $F_n^{(II)}(x)$ is asymptotic to an appropriate combination of $n^{-9/8}\exp\left(-n^{3/4}\p_n(s^\ast_n(\omega))\right)$ for $\omega^4=x$. Note that these possible asymptotics are exactly those obtained in Proposition \ref{II_AQ_BT} via the Birkhoff-Trjitzinsky theory. The specific asymptotics that appear in the combination here depend on the approximate saddle points that the curve of steepest descent, which the contour $n^{3/4}\mathcal{C}$ needs to be deformed to, passes through. In this setting, the contour can't be deformed to a curve that passes through the approximate saddle point $s_n^\ast(-x^{1/4})$ on the negative real line because then it would have to cross the strip $[-n^{1/4},0]$ inside $n^{-3/4}\mathcal{C}$ where the integrand has its poles.
\end{proof}

\section{Connection}

From Proposition \ref{I_AQ_BT} and Proposition \ref{II_AQ_BT}, one can observe that the asymptotics of the solutions of the recurrence relations in both settings are closely related, see, e.g., the tables with the potential powers for $x=1$ below. 

\begin{center}
\begin{tabular}{c||c|c|c}
    $S_n^{(I)}(\omega)$ & $n^{3/4}$ & $n^{1/2}$ & $n^{1/4}$ \\ \hline
    $1$ & $4$ & $-1/2$ & $-3/8$ \\
    $i$ & $(4i)$ & $1/2$ & $(3/8i)$ \\
    $-i$ & $(-4i)$ & $1/2$ & $(-3/8i)$ \\
    $-1$ & $-4$ & $-1/2$ & $3/8$ \\
\end{tabular}
\qquad
\begin{tabular}{c||c|c|c}
    $S_n^{(II)}(\omega)$ & $n^{3/4}$ & $n^{1/2}$ & $n^{1/4}$ \\ \hline
    $1$ & $4$ & $1/2$ & $-3/8$ \\
    $i$ & $(4i)$ & $-1/2$ & $(3/8i)$ \\
    $-i$ & $(-4i)$ & $-1/2$ & $(-3/8i)$ \\
    $-1$ & $-4$ & $1/2$ & $3/8$ \\
\end{tabular}
\end{center}

In the first place, this can be explained by the fact that the integrand in the integral representation for the associated mixed type functions in Proposition \ref{I_int} and Proposition \ref{II_int} are essentially each other's reciprocal, hence the powers should be each other's opposite. A more deeper connection can be established through relations similar as in \eqref{L_I_cof} that arise via the underlying Riemann-Hilbert problems: the rows in the table on the left arise as sums of three different rows in the table on the right (and vice versa). This connection is stronger in the sense that it actually enables us to study the asymptotics of objects in one setting by studying the asymptotics of specific objects in the other. As such, we can also give an alternative proof of Theorem \ref{Res_EC} solely by analyzing the asymptotics of the objects associated with system $(I)$. In what follows, we will provide the key ideas on how this can be done (a complete proof is more cumbersome and will be omitted).
\medbreak

The Riemann-Hilbert problem for mixed type multiple orthogonal polynomials, see \cite[Lemma 3.3]{DaemsKuijl}, gives the following relation
    $$Y_{\vec{n},\vec{m}}^{(I)} = \begin{pmatrix}
        0 & -I_2 \\
        I_2 & 0
    \end{pmatrix}
    (Y_{\vec{m},\vec{n}}^{(II)})^{-T}
    \begin{pmatrix}
        0 & I_2 \\
        -I_2 & 0
    \end{pmatrix},
    $$
if $|\vec{n}|=|\vec{m}|$, where, up to a particular scaling of each row,
    $$ Y_{\vec{n},\vec{m}}^{(\ast)} = \begin{pmatrix} 
    F_{\vec{n}+\vec{e}_1,\vec{m};1}^{(\ast)} & F_{\vec{n}+\vec{e}_1,\vec{m};2}^{(\ast)} & E_{\vec{n}+\vec{e}_1,\vec{m};1}^{(\ast)} & E_{\vec{n}+\vec{e}_1,\vec{m};2}^{(\ast)} \\ 
    F_{\vec{n}+\vec{e}_2,\vec{m};1}^{(\ast)} & F_{\vec{n}+\vec{e}_2,\vec{m};2}^{(\ast)} & E_{\vec{n}+\vec{e}_2,\vec{m};1}^{(\ast)} & E_{\vec{n}+\vec{e}_2,\vec{m};2}^{(\ast)} \\ 
    F_{\vec{n},\vec{m}-\vec{e}_1;1}^{(\ast)} & F_{\vec{n},\vec{m}-\vec{e}_1;2}^{(\ast)} & E_{\vec{n},\vec{m}-\vec{e}_1;1}^{(\ast)} & E_{\vec{n},\vec{m}-\vec{e}_1;2}^{(\ast)} \\ 
    F_{\vec{n},\vec{m}-\vec{e}_2;1}^{(\ast)} & F_{\vec{n},\vec{m}-\vec{e}_2;2}^{(\ast)} & E_{\vec{n},\vec{m}-\vec{e}_2;1}^{(\ast)} & E_{\vec{n},\vec{m}-\vec{e}_2;2}^{(\ast)} \end{pmatrix}. $$
Rows associated with multi-indices $(\vec{n}+\vec{e}_k,\vec{m})$ are scaled such that the entry $F_{\vec{n}+\vec{e}_k,\vec{m};k}^{(\ast)}(x)$ is monic. Rows associated with multi-indices $(\vec{n},\vec{m}-\vec{e}_k)$ are scaled such that mixed type function $F_{\vec{n},\vec{m}-\vec{e}_k}^{(\ast)}(x)$ satisfies the type I normalization condition with respect to the $k$-th type II orthogonality weight in system $(\ast)$, i.e.
$$ \int_0^\infty F_{\vec{n},\vec{m}-\vec{e}_k}^{(\ast)}(x) x^{m_k-1} w_k^{(\ast)}(x) dx = 1. $$
Since this particular scaling ensures that $\det(Y_{\vec{m},\vec{n}}^{(II)})=1$, it follows from the adjugate formula that the inverse transpose of $Y_{\vec{m},\vec{n}}^{(II)}$ is equal to its cofactor matrix $\text{CF}(Y_{\vec{m},\vec{n}}^{(II)})$. We therefore have, up to an appropriate scalar,

$$\text{CF}(Y_{\vec{m},\vec{n}}^{(II)}) = \begin{vmatrix} 
    E_{\vec{n},\vec{m}-\vec{e}_1;1}^{(I)} & E_{\vec{n},\vec{m}-\vec{e}_1;2}^{(I)} & 
    -F_{\vec{n},\vec{m}-\vec{e}_1;1}^{(I)} & -F_{\vec{n},\vec{m}-\vec{e}_1;2}^{(I)} \\ 
    E_{\vec{n},\vec{m}-\vec{e}_2;1}^{(I)} & E_{\vec{n},\vec{m}-\vec{e}_2;2}^{(I)} & 
    -F_{\vec{n},\vec{m}-\vec{e}_2;1}^{(I)} & -F_{\vec{n},\vec{m}-\vec{e}_2;2}^{(I)} \\ 
    -E_{\vec{n}+\vec{e}_1,\vec{m};1}^{(I)} & -E_{\vec{n}+\vec{e}_1,\vec{m};2}^{(I)} & F_{\vec{n}+\vec{e}_1,\vec{m};1}^{(I)} & F_{\vec{n}+\vec{e}_1,\vec{m};2}^{(I)} \\ 
    -E_{\vec{n}+\vec{e}_2,\vec{m};1}^{(I)} & -E_{\vec{n}+\vec{e}_2,\vec{m};2}^{(I)} & F_{\vec{n}+\vec{e}_2,\vec{m};1}^{(I)} & F_{\vec{n}+\vec{e}_2,\vec{m};2}^{(I)} \end{vmatrix}.$$

Hence, up to an appropriate scalar, the mixed type functions in system $(I)$ can be written as  
\begin{equation} \label{I_RH}
    F_{\vec{n}+\vec{e}_2,\vec{m}}^{(I)} = \begin{vmatrix} F_{\vec{m}+\vec{e}_1,\vec{n};1}^{(II)} & F_{\vec{m}+\vec{e}_1,\vec{n};2}^{(II)} & E_{\vec{m}+\vec{e}_1,\vec{n}}^{(II)} \\ F_{\vec{m}+\vec{e}_2,\vec{n};1}^{(II)} & F_{\vec{m}+\vec{e}_2,\vec{n};2}^{(II)} & E_{\vec{m}+\vec{e}_2,\vec{n}}^{(II)} \\ 
    F_{\vec{m},\vec{n}-\vec{e}_1;1}^{(II)} & F_{\vec{m},\vec{n}-\vec{e}_1;2}^{(II)} & E_{\vec{m},\vec{n}-\vec{e}_1}^{(II)} \end{vmatrix},
\end{equation}
in terms of a particular combination $ E_{\vec{m},\vec{n}}^{(II)}(z) = E_{\vec{m},\vec{n};1}^{(II)}(z) \ln z - E_{\vec{m},\vec{n};2}^{(II)}(z)$ of the two errors in the associated Hermite-Padé approximation problem
    $$ E_{\vec{m},\vec{n};1}^{(II)}(z) = \int_0^\infty \frac{F_{\vec{m},\vec{n};1}^{(II)}(x)}{z-x} dx,\quad E_{\vec{m},\vec{n};1}^{(II)}(z) = \int_0^\infty \frac{F_{\vec{m},\vec{n};1}^{(II)}(x)}{z-x} \ln x \, dx. $$
In this context, the appropriate scalar is given by
\begin{equation} \label{I_RH_scalar}
    \mathcal{F}_{\vec{n}+\vec{e}_2,\vec{m}} = \frac{\text{LC}(F_{\vec{n}+\vec{e}_2,\vec{m};2}^{(I)})}{\text{LC}(F_{\vec{m}+\vec{e}_1,\vec{n};1}^{(II)}) \cdot \text{LC}(F_{\vec{m}+\vec{e}_2,\vec{n};2}^{(II)}) \cdot \int_0^\infty F_{\vec{m},\vec{n}-\vec{e}_1;1}^{(II)}(x) x^{n_1-1} dx }.
\end{equation}
Taking $\vec{n}=(n+1,n)$ and $\vec{m}=(n+1,n)$ in \eqref{I_RH} then allows us to express the errors $F_{n}^{(I)}$ and denominators $F_{n;2}^{(I)}$ in system $(I)$ in terms of objects from the dual system $(II)$. After scaling the objects in both systems as in Proposition \ref{I_form} and Proposition \ref{II_form}, it can be expected that the scalar in \eqref{I_RH_scalar} is asymptotically negligible (compared to sub-exponential growth). Indeed, the objects that appear in the formula are then normalized to grow sub-exponentially and a determinant with entries that grow at most sub-exponentially grows at most sub-exponentially itself. In order to actually prove this, we require knowledge about the mixed type functions for more general multi-indices than in Proposition \ref{I_form} and Proposition \ref{II_form} (but that can be obtained using the same ideas).
\medbreak

It then remains to describe the asymptotics of the modified error $E_{\vec{m},\vec{n}}^{(II)}(z)$. This can be done by analyzing the following integral representation
$$E_{\vec{m},\vec{n}}^{(II)}(z) = (-1)^{n_1+n_2} \int_0^\infty\int_0^\infty\int_0^\infty  \frac{x^{n_2} y^{n_2} t^{m_2} e^{-x(1+t)} }{(z+xy)^{n_2+1}(1+y)^{n_1+1}(1+t)^{m_2+1}} dx dy dt,$$
which further generalizes the integral Rivoal considered in \cite[Lemma 1]{Riv}. Doing so allows us to prove that, e.g., $E_n^{(II)}(z) \asymp S_n(-\sqrt[4]{z})$ as $n\to\infty$ for $z>0$.

\section{Further improvement}

In what follows, we will propose a construction that could lead to a further improvement of the approximants of Euler's constant in Theorem \ref{Res_EC}. Since the resulting quality is still of a similar kind as before, we will only provide the key ideas.
\medbreak

In order to obtain approximants for which the absolute error has a faster sub-exponential decay than before, we can apply the the differential operator 
    $$F(x) \mapsto \frac{1}{n!}\frac{d^n}{dx^n} [x^n F(x)]$$
$p-1$ more times to the linear forms $F_n^{(I)}(x)$ obtained in Proposition \ref{I_form}. Doing so yields a function of the form
$$F_n^{(I\mid p)} = n!^{2-p} \int_{\Sigma} \frac{(t+1)_{n}^p}{(-t)_{n+1}^2}  \frac{x^t}{\Gamma(t+1)} \frac{dt}{2\pi i},$$
where $\Sigma$ is a counterclockwise contour in $\{t\in\C \mid \text{Re}(t)>-1\}$ that encloses $(0,\infty)$ exactly once. We still have
$$F_n^{(I\mid p)}(x) = F_{n;1}^{(I\mid p)}(x) - F_{n;2}^{(I\mid p)}(x) (\gamma+\ln x),$$
but now the denominators are of the form
    $$F_{n;2}^{(I\mid p)}(x) = \sum_{k=0}^n \binom{n}{k}^2 \binom{n+k}{k}^p \frac{x^k}{k!}. $$
It can be shown that the Diophantine properties in Proposition \ref{I_DQ} remain unchanged. By analyzing the asymptotic properties of the contour integral as in Proposition \ref{I_AQ_error}, one should be able to prove that the main contributions to the possible asymptotics for $F_n^{(I\mid p)}(x)$ are determined by $\exp((p+3)\omega n^{(p+2)/(p+3)}(1+o(1)))$ in terms of $\omega\in\C$ with $\omega^{p+3}=x$. Similarly as before, due to the structure of the contour, it then follows that, when $x>0$, the solution $\omega=x^{1/(p+3)}$ on the positive real line can't contribute to the asymptotics for $F_n^{(I\mid p)}(x)$. This solution should again correspond to the asymptotics of the denominators $F_{n;2}^{(I\mid p)}(x)$. In that case, the absolute error indeed has a faster sub-exponential decay
    $$ \left| (\gamma + \ln x) - \frac{F_{n}^{(I\mid p)}(x)}{F_{n;2}^{(I\mid p)}(x)}\right| = \BO(\exp(-(p+3) x^{1/(p+3)}n^{(p+2)/(p+3)}(1+o(1)))),\quad n\to\infty, $$
and the approximants of $\gamma$ are of an overall better quality in the sense of Lemma \ref{OQ}:
    $$ r^{(I\mid p)}(n) = \frac{p+3}{n^{1/(p+3)}\ln n} (1+o(1)),\quad n\to\infty . $$
It was brought to our attention that a construction in \cite[Theorem 1.1]{PilehroodEC} also leads to approximants of $\gamma$ with faster sub-exponential decay. There one showed that for
    $$Q_n^{\text{Pil},a} = \sum_{k=0}^n \binom{n}{k}^a k! ,\quad P_n^{\text{Pil},a} = \sum_{k=0}^n \binom{n}{k}^a k! (aH_{n-k}-(a-1) H_k),$$
one has
$$ \left| \gamma - \frac{P_n^{\text{Pil},a}}{Q_n^{\text{Pil},a}} \right| = \BO(\exp(-a (1-\cos(2\pi/a)) n^{(a-1)/a} (1+o(1)))),$$ 
$$Q_n^{\text{Pil},a} \asymp n! n^{-(a/2+1/(2a))} \exp( a n^{(a-1)/a} (1+o(1))), $$
as $n\to\infty$. The overall quality is then determined by
$$ r_{\text{Pil},a}(n) = \frac{a(1-\cos(2\pi/a))}{n^{1/a}\ln n} (1+o(1)),\quad n\to\infty, $$
which has a similar dependence on $n$ as in the above construction, but with a constant $a(1-\cos(2\pi/a))$ that tends to $0$ as $a\to\infty$.

\end{document}